\pgfplotsset{compat=newest}
\definecolor{mycolor}{rgb}{0,0.6,0.5}
\newtheorem{theorem}{Theorem}[section]
\newtheorem{lemma}{Lemma}[section]
\newtheorem{example}{Example}[section]
\newtheorem{remark}{Remark}[section]
\newtheorem{definition}{Definition}[section]
\theoremstyle{definition}\newtheorem{assumption}{Assumption}
\Crefname{assumption}{Assumption}{Assumptions}
\DeclareMathOperator{\diag}{diag}
\DeclareMathOperator{\Id}{Id}
\DeclareMathOperator{\Zero}{O}
\def \G {\Gamma}
\def \U {\Upsilon}
\def \td {\tilde}
\def \x {y} % y
\def \y {z} % z 
\def \P {R} % R
\def \Q {S} % S
\def \R {T} % T
\def \p {r} % R
\def \q {s} % S
\def \r {t} % T
\def \a {q} % q
\def \A {Q} % Q
\def \f {\mathcal{F}}
\def \e {\zeta} % \zeta
\def \u {u}
\def \t {\eta} % for delay tau
\begin{document}

%%%%%%%%%%%%%%%%%%%%%%%%%%%%%%%%%%%%%%%%%%%%%%%%%%%%%%%%%%%%%%%%%%%%%%%%%%%%%%%%
% PAPER INFORMATION.                                                           %
%%%%%%%%%%%%%%%%%%%%%%%%%%%%%%%%%%%%%%%%%%%%%%%%%%%%%%%%%%%%%%%%%%%%%%%%%%%%%%%%

\title{Exponential Lag Synchronization of Cohen-Grossberg Neural Networks
with Discrete and Distributed Delays on Time Scales}
  
\author[]{Vipin Kumar}
\affil[]{Max Planck Institute for Dynamics of Complex Technical Systems, Sandtorstra\ss e 1, 39106 Magdeburg, Germany.\authorcr
  \email{vkumar@mpi-magdeburg.mpg.de}, \orcid{0000-0002-7068-5426}}
  
\author[]{Jan Heiland}
\affil[]{Max Planck Institute for Dynamics of Complex Technical Systems, Sandtorstra\ss e 1, 39106 Magdeburg, Germany.\authorcr
  \email{heiland@mpi-magdeburg.mpg.de\}}, \orcid{0000-0003-0228-8522}}
  
\author[]{Peter Benner}
\affil[]{Max Planck Institute for Dynamics of Complex Technical Systems, Sandtorstra\ss e 1, 39106 Magdeburg, Germany.\authorcr
  \email{benner@mpi-magdeburg.mpg.de}, \orcid{0000-0003-3362-4103}}

\shorttitle{Cohen-Grossberg Neural Networks on Time Scales}
\shortauthor{V. Kumar, J. Heiland, P. Benner}
\shortdate{}
  
\keywords{}

%\msc{MSC1, MSC2, MSC3}
  
\abstract{
In this article, we investigate exponential lag synchronization results for the Cohen-Grossberg neural networks (C-GNNs) with discrete and distributed delays on an arbitrary time domain by applying feedback control. 
We formulate the problem by using the time scales theory so that the results can be applied to any uniform or non-uniform time domains. 
Also, we provide a comparison of results that shows that obtained results are unified and generalize the existing results. 
Mainly, we use the unified matrix-measure theory and Halanay inequality to establish these results. 
In the last section, we provide two simulated examples for different time domains to show the effectiveness and generality of the obtained analytical results.}

\novelty{This is the first attempt to discuss the exponential lag synchronization results for the generalized C-GNNs with mixed delays on time scales. 
The results are obtained by applying the novel unified matrix-measure theory and Halanay inequality. 
A comparison of results shows that these results unify and generalize the existing results. 
An example with simulation for different time domains is given to illustrate the analytical results.}

\maketitle

%%%%%%%%%%%%%%%%%%%%%%%%%%%%%%%%%%%%%%%%%%%%%%%%%%%%%%%%
\section{Introduction}\label{sec1}
Since the 1980s, neural networks (NNs), including 
recurrent NNs, Hopfield NNs, cellular NNs, and bi-directional associative NNs, 
have been a subject of intense study because of their large number of potential applications in many fields, such as 
the classification of patterns, 
signal and image processing, optimization problems, associative memory, parallel computing, and so on.
In $1983$, Cohen-Grossberg \cite{CG} introduced the C-GNNs which are recognized as one of the most important and typical NNs
because some other well-known NNs, for example, recurrent NNs, cellular NNs, and Hopfield NNs are special cases of C-GNNs.  
As a result, these types of networks have attracted considerable research attention and have been extensively studied 
in terms of their dynamical 
properties such as state estimation \cite{CG-app-state}, periodicity \cite{CG-app-periodic}, stability \cite{CG-app-stability-2,CG-app-stability-3},  boundedness \cite{CG-app-bound-2}, and synchronization \cite{CG-synchro-1,syn-exponential-1}.
Furthermore, due to the importance of discrete-time C-GNNs as discussed in \cite{CG-D-1}, the dynamics of discrete-time C-GNNs have become a popular research topic; see, for example, \cite{CG-D-2, CG-D-3,CG-D-6,CG-D-7}.

Synchronization is one of the most important qualitative properties of dynamic systems and means that two or more dynamic systems lead to a common dynamical behaviour by using some coupling or external forces.  
The concept of synchronization in drive-response systems was first introduced by Pecora and Carrol \cite{pecora}, and since then, it has been capturing increased attention   from   both   a fundamental   and   application-driven   perspective.
  Potential applications   of synchronization   can be found in many areas of applied sciences, 
such as harmonic oscillation generation, information science, human heartbeat regulation, 
chemical and biological systems, and secure communication \cite{app-synchro-1,app-synchro-2,app-synchro-3}.
In the last few years, various types of synchronization phenomena have been discovered and investigated, such as exponential synchronization \cite{syn-exponential-2, add-2}, complete synchronization \cite{syn-complete},  finite-time synchronization \cite{syn-finite,syn-finite-2}, lag synchronization \cite{syn-lag}, adaptive synchronization \cite{syn-adaptive,add-1}, and projective synchronization \cite{syn-projective,add-2}.  
Among them, lag synchronization has been extensively studied \cite{work-lag-synchro-1,work-lag-synchro-2,work-lag-synchro-3,work-lag-synchro-4}
due to its relevance in connected electronic networks, where constant time shifts between drive and response systems can make complete synchronization difficult to implement effectively.
 In practical applications, both discrete and continuous dynamic systems play a significant role, but results for them are often studied separately.
 
In $1988$, Hilger \cite{ts-thesis}, introduced the so-called \emph{time scale theory (or measure chain theory)} %to 
which unifies the   separate analysis of   discrete and continuous   dynamic systems into a single comprehensive analysis.
  Eventhough, the study of dynamic systems is not limited to just discrete and continuous-time domains. In fact,
%\rc apart from the discrete and continuous-time domains, 
there are many other time domains which can be useful to study the dynamic behaviours of dynamic systems more accurately. 
For example, 
to model the growth process of some species like Magicicada Septendecim, Magicicada Cassini,
and Pharaoh Cicada, we need a time domain of the form 
$T=\cup_{k=0}^\infty [k(a+b), k(a+b)+b], \ a,b \in (0,\infty)$.
 Further, 
there exist neurons in the brain that follow a pattern of being active during the day and inactive at night. Intuitively, the dynamic behaviour of these neurons can be observed in the time domain 
% which can be observed in the time domain
$\mathbb{T} = \bigcup_{l=0}^\infty [24l, 24l+d_l],$
where $d_l$ denotes the number of active hours of the neurons in each day; 
see \Cref{ex-brain}.
\begin{figure}[t]
\begin{center}
\includegraphics{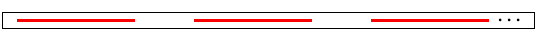}
 \caption{Red lines denote the active time of neurons during a day while the gap shows
the inactive time of neurons at night} \label{ex-brain}
\end{center}
\end{figure}
Another example is an RLC circuit (see \Cref{RLC}), where if the capacitor discharges with small time units $\delta>0$ at periodic intervals of $l$ time units, the dynamics of such a model can be modelled on the time scale $T = \bigcup_{l=0}^\infty [l, l+1-\delta].$ 
%Furthermore, in a simple $RLC$ circuit (see \Cref{RLC}):
%if the capacitor discharges with small time units $\delta>0$ at periodic intervals of $l$ time units, 
%then, the dynamics of such a model can be conveniently modelled on the time scale \cite{ts-book1}:
%$T = \bigcup_{l=0}^\infty [l, l+1-\delta].$
\begin{figure}[t]
  \begin{center}
    \includegraphics[width=3.in, height=1.4in]{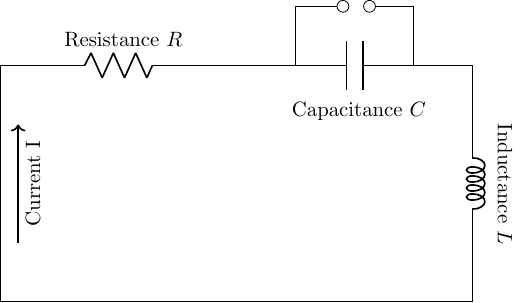}
  \caption{A simple $RLC$ circuit} \label{RLC}
  \end{center}
\end{figure}
These examples require a time domain 
%All the above-discussed examples require a time domain 
  which is neither discrete nor continuous.
%, and thus, neither the difference equation nor the differential equations can give the accurate behaviour of such types of models. 
  However,   
the time scale theory can overcome such difficulties as it
gives the freedom to work on the general domain, 
i.e., the results obtained by using the time scales will also be valid for uniform and non-uniform time domains such as the non-overlapping closed intervals, a mixture of closed intervals and discrete points, and even a discrete non-uniform time domain.
Thus, we can summarize the above and state that ``Unification and Extension" are two main features of the time scale theory. 
Therefore, it is worth to investigate the dynamic equations on time scales. 
For more 
%\rc study 
  studies   on time scales, one can refer to the monograph \cite{ts-book1}.

In the last few years, the study of dynamic equations on time scales has drawn a tremendous amount of
attention across the world and many researchers found its applications in many fields, such as
epidemiology, economics, and control theory \cite{ts-app-eco, ts-app-control}.
Recently, many authors   have also   established different types of
qualitative behaviours of dynamic systems on time scales, for example, the existence of solutions, stability analysis, stabilization, and synchronization \cite{work-ts-1,work-ts-2,work-ts-3,work-ts-5,work-ts-6}.
Also, few authors established the existence of periodic, anti-periodic, almost-periodic solutions and their stability results of the C-GNNs \cite{work-cgnn-1,work-cgnn-2,work-cgnn-3,E-work-cgnn-1,E-work-cgnn-2,E-work-cgnn-3}.
In \cite{E-work-cgnn-1}, the authors studied the existence of   an   anti-periodic solution and exponential stability for C-GNNs with time-varying delays on time scales. 
In \cite{E-work-cgnn-2}, the authors established the existence and global exponential stability of almost periodic solutions for C–GNNs with distributed delays on time scales while 
in \cite{E-work-cgnn-3}, the authors considered the impulsive C-GNNs
with distributed delays on time scales and studied the existence and exponential stability of periodic solutions by 
using Lyapunov functions, M-matrix theory, and coincidence degree theory.

Despite the growing interest in the study of dynamic equations on time scales,
  the synchronization problem of C-GNNs on time scales has not been studied so far to the best of our knowledge. 
Therefore, to fill this gap, in this work, we  establish exponential lag synchronization results for 
C-GNNs with discrete and distributed time delays on time scales by using feedback control, 
a novel unified matrix-measure 
technique
%\rc 
%as well as
and  
the Halanay inequality.
In short, the 
%\rc essential commitment 
  main focus and benefit of this manuscript can be summarized as follows:
 
\begin{itemize}
\item The C-GNNs with discrete and distributed delays on arbitrary time domains are considered to study exponential lag synchronization. 

\item The problem is formulated by using the time scales theory and
the results are derived based on a unified matrix-measure theory and the Halanay inequality.

\item The results for different special cases are given which shows that the obtained results unify and generalize the existing results.

\item A simulated example for different time scales including continuous, discrete and non-overlapping closed intervals, is given to verify the obtained analytical outcomes.
\end{itemize}

The remaining part of the manuscript is organized as follows: 
In  \Cref{sec:pre}, we recall 
  basic concepts from matrix theory and time scales that are essential for the subsequent sections.
In  \Cref{sec:SOP}, we formulate our statement of the problem.
In  \Cref{sec:Results}, the main results are discussed.
  Finally, in 
\Cref{sec:examples}, two numerical examples with simulation are given to verify the obtained results.

%%%%%%%%%%%%%%%%%%%%%%%%%%%%%%%%%%%%%
\section{Preliminaries} \label{sec:pre}
Throughout this paper, the notations $\mathbb{R}, \mathbb{Z}$ and $\mathbb{N}$
denote the set of all real, integers. and natural numbers, respectively;
$\mathbb{T}$ denotes the time scale;
$\emptyset$ denotes the empty set;
$\mathbb{R}^n$ and $\mathbb{R}^{n \times m}$
 denote the $n$-dimensional Euclidean space and the set of all $n \times m$ matrices, respectively;
$\diag\{\ldots\}$ denotes the diagonal matrix;
Superscript $*$ denotes the matrix transpose;
$\Id$ and $\Zero$ denote the identity and zero matrices of appropriate dimensions, respectively;
$[a,b]_\mathbb{T} = [a,b] \cap \mathbb{T}$, denotes the time scale interval.
For any $a, b \in \mathbb{R}, C([a,b],\mathbb{R}^n)$ denotes
the set of continuous functions from $[a,b]$ into $\mathbb{R}^n$;
$\| \cdot \|_p, \ (p =1,2,\infty)$ is used to denote the $p$-norm for a vector or for a matrix.

Next, we recall some basic definitions and results about time scale calculus. 

A \emph{time scale} is an arbitrary non-empty closed subset of the real numbers $\mathbb{R}$ with the topology and ordering inherited from $\mathbb{R}$. 
$h\mathbb{Z}(h>0)$, $\mathbb{R},$  $\mathbb{P}_{a,b} = \cup_{k=0}^\infty [k(a+b), k(a+b) + a]$ for $a,b \in (0,\infty)$, and any discrete set are some examples of time scales. 
The \emph{forward and backward jump operators} $\sigma, \rho : \mathbb{T} \to \mathbb{T}$ are defined by
$\sigma(t) = \inf \{ s \in \mathbb{T}: s>t\}$
and 
$\rho(t) = \sup\{ s \in \mathbb{T}: s<t \},$
respectively with the substitution 
$\sup \mathbb{T} = \inf \emptyset$ and $\inf \mathbb{T} = \sup \emptyset$.
Also the graininess functions $\mu:\mathbb{T} \to [0,\infty)$ is given by $\mu(t) = \sigma(t) - t$.
A point $t\in \mathbb{T}$ is called 
\emph{right-dense} if $t<\max\{ \mathbb{T} \}$ and $\sigma(t) = t$,
\emph{left-dense} if $t>\min\{ \mathbb{T} \}$ and $\rho(t) = t$,
\emph{right-scattered} if $\sigma(t)>t$, and
\emph{left-scattered} if $ \rho(t) <t$.
If $\mathbb{T}$ has a left-scattered maximum $M$, then we set  $\mathbb{T}^k = \mathbb{T}\setminus\{M\}$, otherwise $\mathbb{T}^k = \mathbb{T}$.

\begin{definition} [\cite{work-ts-1}, Def. 1]  \label{def:delta} 
Let $f : \mathbb{T} \to \mathbb{R}$ be a function. Then the \emph{delta derivative} of $f$ at a point $t \in \mathbb{T}^k$ is defined as a number $f^\Delta(t)$ (provided it exists) whenever for each $\epsilon > 0$ there exists a neighborhood $U$ of $t$ such that
$$ \vert [f(\sigma(t)) - f(s)] - f^\Delta(t)[\sigma(t) - s] \vert \leq \epsilon \vert \sigma(t) - s  \vert \  \text{for all} \  s \in U.
$$
Further, if the neighborhood $U$ is replaced by the right-hand sided neighborhood $U^+$, then the delta derivative is
called the \emph{upper right Dini-delta-derivative} and denoted by $D_\Delta^+f(t)$.
\end{definition}

\begin{remark} \label{add:remark-1}
In the above \Cref{def:delta}, if $\mu(t) = 0$, then the delta derivative $f^\Delta(t)$ becomes the ordinary derivative $f^\prime(t)$ and the upper right Dini-delta-derivative $D_\Delta^+f(t)$ becomes the ordinary upper right Dini-derivative $D^+f(t)$.
Further, if $\mathbb{T} = h \mathbb{Z}, \ h>0$, then the delta derivative $f^\Delta(t)$ becomes the h-difference operator, i.e., $f^\Delta(t) = \frac{f(t+h)-f(t)}{h}$. 
\end{remark}

\begin{remark}
Let $f : \mathbb{T} \to \mathbb{R}$ is differentiable at $t\in \mathbb{T}^k$, then the forward operator $\sigma$ and the delta derivative of $f$ are related by the formula $f(\sigma(t)) = f(t) + \mu(t) f^\Delta(t)$.
\end{remark}

A function $f :\mathbb{T}\to \mathbb{R}$ is called \emph{regressive (or positive regressive)} if $1+\mu(t)f(t) \neq 0 (or \ >0)$ for all $t\in \mathbb{T}$.
Also, $f$ is called \emph{regulated} provided its right-side limit exists (finite) at all right-dense points of $\mathbb{T}$ and its left-side limit exist (finite) at all left-dense points of $\mathbb{T}$. 
Furthermore, $f$ is called a \emph{rd-continuous function} if it is regulated and it is continuous at all right-dense points of $\mathbb{T}$.
The collection of all rd-continuous functions and rd-continuous regressive (or rd-continuous positive regressive) functions from $\mathbb{T}$ to $\mathbb{R}$ are defined, respectively, by $C_{rd}(\mathbb{T},\mathbb{R})$ and $\mathcal{R}(or \ \mathcal{R}^+)$.

\begin{definition} [\cite{work-ts-3}, Def. 2.6]
For any $p \in \mathcal{R}$ and $t \in \mathbb{T}^k$, we define $\ominus p$ by 
$$(\ominus p)(t) = - \dfrac{p(t)}{1+\mu(t) p(t)}.$$
\end{definition}

\begin{remark}
If $p \in \mathcal{R}$, then $\ominus p \in \mathcal{R}$.
\end{remark}
  
Next, we define the time scales version of the exponential function.   
\begin{definition} [\cite{ts-book1}, Def. 2.30] Let $p\in \mathcal{R}$, then we define the exponential function on time scales by 
$$e_p(t,s)= \exp \left(\int_s^t \zeta_{\mu (z)}(p(z))\Delta   z  \right) \ \text{for} \  t, s   \in \mathbb{T} $$
with 
\[
    \zeta_{\mu (s)}(p(s))=
\begin{cases}
    \dfrac{1}{\mu ( s  )}\log(1+p(s)\mu (s)), & \text{if } \mu (  s  ) \neq 0,\\
   p(s), & \text{if } \mu (s)=0.
\end{cases}
\]
\end{definition}

Next, we define the delta-integral on time scales. 

\begin{definition}  [\cite{ts-book1}, Def. 1.71]
Let $f: \mathbb{T} \to \mathbb{R}$ be a regulated function, then a function $F : \mathbb{T} \to \mathbb{R}$ is called an \emph{anti-derivative of $f$} if $F^\Delta(t) = f(t)$ holds for all $t \in \mathbb{T}^k$. Also, we define the Cauchy integral by
$$\int_{a}^b f(t) \Delta (t) = F(b)-F(a) \ \text{for all \ } a,b, \in \mathbb{T}. $$
\end{definition}

\begin{remark} \label{remark:integral}
For any $a, b \in \mathbb{T}$ and $ f \in  C_{rd}(\mathbb{T},\mathbb{R})$, if we set $\mathbb{T=R}$, then we have
\begin{align*}
\int_a^b f(t) \Delta t = \int_a^b f(t) dt.
\end{align*}
Further, if $[a, b)_\mathbb{T}$ consists of only isolated points, then we have 
\begin{align*}
\int_a^b f(t) \Delta t = \begin{cases}
\sum_{t \in [a,b)_\mathbb{T}} \mu(t) f(t) \quad & \text{if}  \ a<b,\\
0 & \text{if}  \ a=b,\\
 - \sum_{t \in [a,b)_\mathbb{T}} \mu(t) f(t) \quad & \text{if}  \ a>b.
\end{cases}
\end{align*}
\end{remark}

Next, we recall some basics from matrix-measure theory.

\begin{definition} [\cite{work-ts-6}, Def. 1] \label{mm-real}
The \emph{generalized matrix-measure}
 and 
\emph{classical matrix-measure} of a real square matrix 
$W = (w_{kl})_{n \times n}$ with respect to the $p-$norm $(p=1, 2$ or $\infty)$ are defined by
\begin{align*}
\omega_p (W,h) =  \dfrac{\| \Id + h W\|_p - 1}{h} \ \text{and} \ \Lambda_p (W) = \lim_{s \to 0^+} \dfrac{\| \Id + s W\|_p - 1}{s},
\end{align*}
respectively, where $h>0$. 
The matrix norms and corresponding classical matrix-measures are given in \Cref{table:Matrix-norm-measure}.
\end{definition}
%\begin{table}[h]
%\begin{center}
%\begin{minipage}{274pt}
%\caption{Matrix norms and corresponding classical matrix-measures}
%     \label{table:Matrix-norm-measure}
%\begin{tabular}{@{}llll@{}}
%\toprule
%Matrix norm & \quad Matrix-measure \\
%\midrule
%$\|W\|_1 = \max_{j} \sum_{i=1}^n \vert w_{ij} \vert $ & \quad $\Lambda_1(W) =   \max_{j} w_{jj} +  \sum_{i=1, i \neq j}^n \vert w_{ij}\vert$ \\
%     $\|W\|_2 = \sqrt{\lambda_{\max}( W^T W)}$ & \quad $\Lambda_2(W) = \dfrac{1}{2}{\lambda_{\max}( W^T + W)}$\\
%    $\|W\|_\infty = \max_{i} \sum_{j=1}^n \vert w_{ij} \vert$ & \quad    $\Lambda_\infty(W) = \max_{i} w_{ii} +  \sum_{j=1,  \neq i}^n \vert w_{ij} \vert$   \\
%\botrule
%\end{tabular}
%\end{minipage}
%\end{center}
%\end{table}

\begin{table}[h]  
  \begin{center}
    \begin{tabular}{cl}    
      Matrix norm & \quad Matrix-measure \\
      \hline\noalign{\medskip}
$\|W\|_1 = \max_{j} \sum_{i=1}^n \vert w_{ij} \vert $ & \quad $\Lambda_1(W) =   \max_{j} w_{jj} +  \sum_{i=1, i \neq j}^n \vert w_{ij}\vert$ \\
     $\|W\|_2 = \sqrt{\lambda_{\max}( W^T W)}$ & \quad $\Lambda_2(W) = \dfrac{1}{2}{\lambda_{\max}( W^T + W)}$\\
    $\|W\|_\infty = \max_{i} \sum_{j=1}^n \vert w_{ij} \vert$ & \quad    $\Lambda_\infty(W) = \max_{i} w_{ii} +  \sum_{j=1,  \neq i}^n \vert w_{ij} \vert$   \\
      \noalign{\medskip}\hline\noalign{\smallskip}
    \end{tabular} 
     \caption{Matrix norms and corresponding classical matrix-measures}
     \label{table:Matrix-norm-measure}
  \end{center}
\end{table}

\begin{definition} [\cite{work-ts-6}, Def. 2 ] \label{mm-ts}
  Let $W \in \mathbb R^{n\times n}$ be a real matrix and let $\mathbb{T}$ be an arbitrary
time scale.
Then the \emph{unified matrix-measure on $\mathbb{T}$ with respect to the $p-$norm} $(p=1, 2$ or $\infty)$ is defined as
\begin{align*}
M_p(W,\mathbb{T}) =
 \begin{cases}
  \max\biggl\{ \dfrac{\|\Id + \mu(t) W\|_p - 1}{\mu(t)}\colon t \in
  \mathbb{T}\biggr\},  \text{if } \mu(t) > 0, \forall \ t\in \mathbb{T},\\ 
  \max\biggl\{
    \Lambda_p(W), \ \max\Bigl\{ \dfrac{\|\Id + \mu(t) W\|_p - 1}{\mu(t)}\colon t \in
  \mathbb{T},\mu(t)>0\Bigr\} \biggr\}, \ \text{else.} \end{cases}
\end{align*}
\end{definition}

%%%%%%%%%%%%%%%%%%%%%%%%%555
Note that for $\mathbb{T}= \mathbb{R}$ and $\mathbb{T} = h\mathbb{Z}$, $h>0$, \Cref{mm-ts} reduces to
\Cref{mm-real}.

%%%%%%%%%%%%%%%%%%%%%%%%%%%%%%%%%%%%%%%%%%%%
\section{Statement of Problem} \label{sec:SOP}
We consider a class of C-GNNs with discrete and distributed delays on time scales of the following form:
\begin{align} \label{eq:main-D}
\begin{cases}
\x^\Delta(t)  & =  - \G(\x(t))[\U(\x(t)) - \P \f(\x(t)) - \Q \f(\x(t-\t_1)) - \R \int_{t-\t_2}^t \f(\x(s)) \Delta s  - I], \ t \in [0,\infty)_\mathbb{T},\\
\x(s)  & = \phi(s), \ s \in [-\t , 0]_\mathbb{T}, 
\end{cases}
\end{align}
\sloppy where  $\x(t) = [\x_1(t),\x_2(t),\ldots,\x_n(t)]^* \in \mathbb{R}^n$ is the state vector; 
$ \P = (\p_{ij})_{n \times n} \in \mathbb{R}^{n \times n}, \Q= (\q_{ij})_{n \times n} \in \mathbb{R}^{n \times n}$   and   $ \R= (\r_{ij})_{n \times n} \in \mathbb{R}^{n \times n}$  are the connection, discrete delay connection and distributed delay 
% \rc connection 
  connection    strength matrices, respectively; 
$\t_1(>0)$ and $\t_2(>0) $ are the discrete and distributed delay, respectively, such that $t-\t_1 \in \mathbb{T}$ and $t-\t_2 \in \mathbb{T}$; $\t = \max\{\t_1,\t_2\}$; 
$\G(\x(t)) = \diag\{\G_1(\x(t)), \G_2(\x(t)), \ldots, \G_n(\x(t))\} \in \mathbb{R}^{n \times n}$ is the state-dependent amplification function;
$\U(\x(t)) = [\U_1(\x(t)),\U_2(\x(t)),\ldots,\U_n(\x(t))]^* \in \mathbb{R}^n$ is the appropriate behaviour function;  
$\f(\x(\cdot)) = [\f_1(\x(\cdot)),\f_2(\x(\cdot)),\ldots,\f_n(\x(\cdot))]^* \in \mathbb{R}^n$  denotes the activation function;
$I$ is the external bias term;
$\phi \in C_{rd}([-\t,0]_\mathbb{T}, \mathbb{R}^n)$.
%%%%%%%%%%%%%%%%%%%%%%%%%%%%%%%%%%%%%%%%%%%%%%%%%%%%%%

In this paper, we shall establish synchronization results by using the drive-response technique. Therefore, we consider system \eqref{eq:main-D} as the drive system and, correspondingly, we consider a 
%\rc copy of the C-GNNs as 
   response system   described    as follows:
\begin{align} \label{eq:main-R}
\begin{cases}
\y^\Delta(t)  &= - \G(\y(t))[\U(\y(t)) - \P \f(\y(t)) - \Q \f(\y(t-\t_1)) \\
& \quad - \R \int_{t-\t_2}^t \f(\y(s)) \Delta s - I] + \u(t), \ t \in [0,\infty)_\mathbb{T},\\
\y(s)  & = \psi(s), \ s \in [-\t , 0]_\mathbb{T}, 
\end{cases}
\end{align}
where $\y(t) \in \mathbb{R}^n$; 
$\psi \in C_{rd}([\t,0]_\mathbb{T}, \mathbb{R}^n)$; 
$\u(t)$ is the control function defined as 
\begin{align} \label{eq:control}
\u(t) = -K (\y(t) - \x(t-\beta)),
\end{align}
%\rc with $K$ as  
where $K$ is 
the feedback gain matrix and $\beta$ is the transmittal delay such that $t-\beta \in \mathbb{T}$. %%%%%%%%%%%%%%%%%%%%%%%%%%%%%%%%%%%%%%%%%%%%%%%%%%%%%%%%5
\begin{remark} \label{remark:ts:cases}
The considered class of C-GNNs is defined on the general time domain, and hence, it contains the usual continuous-time C-GNNs, discrete-time C-GNNs, and many more. For example, if we consider the\textbf{ continuous-time domain}, i.e., $\mathbb{T} = \mathbb{R}$, then, 
see \Cref{add:remark-1},
the drive system \eqref{eq:main-D} becomes 
\begin{align} \label{eq:main-D-r}
\x^\prime (t)  =  - \G(\x(t))[\U(\x(t)) - \P \f(\x(t)) - \Q \f(\x(t-\t_1)) - \R \int_{t-\t_2}^t \f(\x(s)) d s - I]
\end{align}
and the response system \eqref{eq:main-R} becomes  
\begin{align} \label{eq:main-R-r}
\y^\prime(t)  = \G(\y(t))[\U(\y(t)) - \P \f(\y(t)) - \Q \f(\y(t-\t_1)) - \R \int_{t-\t_2}^t \f(\y(s)) d s - I] + \u(t),  
\end{align}
where  $t \in [0,\infty)$, and the rest of the parameters are the same as defined previously. 
Also, if we choose, the \textbf{$h-$difference discrete-time domain}, i.e., $\mathbb{T}=h\mathbb{Z}$, $h>0$, then, 
see \Cref{add:remark-1} and \Cref{remark:integral},
the drive system \eqref{eq:main-D} is converted to 
\begin{align} \label{eq:main-D-d}
\x(t+h)  & = \x(t) - h \G(\x(t))\bigg [\U(\x(t)) - \P \f(\x(t)) - \Q \f(\x(t-\t_1))  - \R \sum_{k = \frac{t-\t_2}{h}}^{\frac{t}{h} -1}  h\f(\x(kh))  - I  \bigg ] 
\end{align}
and the response system \eqref{eq:main-R} is converted to 
\begin{align} \label{eq:main-R-d}
 \y(t+h)  & =\y(t) - h \G(\y(t)) \bigg [\U(\y(t)) - \P \f(\y(t)) - \Q \f(\y(t-\t_1))  -  \R \sum_{k = \frac{t-\t_2}{h}}^{\frac{t}{h} -1}  h\f(\y(kh))  - I \bigg ] +  h \u(t),
\end{align}
where
 $t \in [0,\infty)_{h \mathbb{Z}}$.
Furthermore, 
by applying the above mentioned cases to 
%\rc for
   
the \textbf{non-overlapping time domain} $\mathbb{T} =\cup_{i=0}^\infty [i, i+h], 0 <h<1,$ the concrete
expression of   the   drive system \eqref{eq:main-D} %\rc is 
  can be derived as  
\begin{align}
\begin{cases}
\x^\prime (t)   & =  - \G(\x(t))[\U(\x(t)) - \P \f(\x(t)) - \Q \f(\x(t-\t_1))   \\
& \quad   - \R \int_{t-\t_2}^t \f(\x(s)) d s - I], \ t \in \cup_{i=0}^\infty [i, i+h),  \\
%%%%%%%%%%%%%%%%
\x(t+1-h)   & = \x(t) - (1-h)\G(\x(t))  [\U(\x(t)) - \P \f(\x(t)) - \Q \f(\x(t-\t_1))  \\
   & \quad - \R \sum_{k = \frac{t-\t_2}{1-h}}^{\frac{t}{1-h} -1}  (1-h)\f(\x(k(1-h))) - I  ], \ t = \cup_{i=0}^\infty \{i+h\}
\end{cases}
\end{align}
and the response system \eqref{eq:main-R} 
%\rc is 
  can be derived as  
\begin{align}
\begin{cases}
\y^\prime(t)   & = - \G(\y(t))[\U(\y(t)) - \P \f(\y(t)) - \Q \f(\y(t-\t_1))   \\
& \quad - \R \int_{t-\t_2}^t \f(\y(s)) d s - I]  + \u(t), \ t \in \cup_{i=0}^\infty [i, i+h), \\
%%%%%%%%%%%%%%%
\y(t+1-h)  & =   \y(t) - (1-h)\G(\y(t))  [\U(\y(t)) - \P \f(\y(t)) - \Q \f(\y(t-\t_1)) \\
 & \quad - \R \sum_{k = \frac{t-\t_2}{1-h}}^{\frac{t}{1-h} -1}  (1-h)\f(\y(k(1-h))) - I  ] + (1-h)\u(t), \\
  & \qquad \  \ t = \cup_{i=0}^\infty \{i+h\}. 
\end{cases}
\end{align}
\end{remark}
%%%%%%%%%%%%%%%%%%%%%%%%%%%%%5555

The main idea of synchronization is 
that the response system \eqref{eq:main-R} utilizes a feasible controller to synchronize itself with the drive system \cref{eq:main-D}. 
 Mathematically, we can define it in the following definition. 
\begin{definition} \label{main:Def:ELS}
The drive system \eqref{eq:main-D} and the response system \eqref{eq:main-R} are said to be \emph{exponentially lag-synchronized} in the timescale sense
under the control protocol \eqref{eq:control} if there exist two constants $C >0$ and $\nu > 0$ such that the following inequality holds
$$\| \y(t) - \x(t - \beta) \|_p \leq C e_{\ominus \nu}(t,0),  \  t \geq 0.$$
\end{definition}

\begin{remark}
In the above \Cref{main:Def:ELS}, if $\beta = 0$, then 
the drive system \eqref{eq:main-D} and the response system \eqref{eq:main-R} are called exponentially synchronized.
\end{remark}

Now, to prove the synchronization results, 
we define the error between   the   drive system \eqref{eq:main-D}
and   the   response system \eqref{eq:main-R} by
 $\e(t) = \y(t) - \x(t-\beta)$, then the error dynamics can be written as
\begin{align} \label{eq:main-E}
\e^\Delta(t)  & = -K\e(t)  
- \td\G(\e(t))[\td\U(\e(t)) - \P \td \f(\e(t)) - \Q \td \f(\e(t-\t_1))- \R \int_{t-\t_2}^t \td \f(\e(s)) \Delta s - I],
\end{align}
where $\e(t)\in \mathbb{R}^n$ and 
\begin{align*}
\td \G(\e(t)) \td \U(\e(t)) & = \G(\y(t))\U(\y(t)) -\G(\x(t-\beta))\U(\x(t-\beta)), \\
\td \G(\e(t)) \P \td \f(\e(t)) & = \G(\y(t)) \P \f(\y(t)) -\G(\x(t-\beta)) \P \f(\x(t-\beta)), \\
\td \G(\e(t)) \Q \td \f(\e(t-\t_1)) & = \G(\y(t)) \Q \f(\y(t-\t_1)) - \G(\x(t-\beta)) \Q \f(\x(t-\beta-\t_1)), \\
\td \G(\e(t)) \R \int_{t-\t_2}^t \td \f(\e(s)) \Delta s  & = \G(\y(t)) \R \int_{t-\t_2}^t \f(\y(s)) \Delta s - \G(\x(t-\beta)) \R \int_{t-\t_2}^t \f(\x(s-\beta)) \Delta s, \\
\td \G(\e(t)) I & =  \G(\y(t)) I -  \G(\x(t-\beta))I.
\end{align*} 

From the definition of $\e(t)$, 
it is clear that if the error system \eqref{eq:main-E} is exponentially stable, then the drive system \eqref{eq:main-D} and the response system \eqref{eq:main-R} are exponentially lag-synchronized. 
Therefore, our goal is to show the exponential stability of the error system \eqref{eq:main-E}.   

To deal with the lag delay, we set 
$\x(s) = \phi(-\t)$ for all $s \in [-\t-\beta, -\t]_\mathbb{T}$ and  
\begin{align*}
\Psi(s) =  
\begin{cases}
\phi(s), \ s \in [-\t, 0]_\mathbb{T},\\
\phi(-\t), \ s \in [-\t-\beta, -\t]_\mathbb{T},
\end{cases}
\end{align*}
then, we can define the initial condition for the error system \eqref{eq:main-E} as follows
\begin{align*}
\e(s) = \psi(s) - \Psi(s-\beta), \ s \in [-\t,0]_\mathbb{T}.
\end{align*}
%%%%%%%%%%%%%%%%%%%%%%%%%%%%%%%%%%%% 
In order to prove the main results, we need the following assumption. 
\begin{assumption} [\cite{add-1},   Ass. A1,A2]  
  \label{ass:fjgi-bounded}
 
The functions 
$\G, \U$ and $\f$ are Lipschitz continuous and bounded. In particular,
for any $\x,\y \in \mathbb{R}^n$, there exist positive constants $L_\G, L_\U, L_\f$ such that
\begin{align*}
& \|\G(\x)-\G(\y)\|_p \leq L_{\G} \|\x-\y\|_p, \ \|\U(\x)-\U(\y)\|_p \leq L_{\U} \| \x-\y\|_p, \  \|\f(\x)-\f(\y)\|_p \leq L_{\f} \|\x-\y\|_p. 
\end{align*}
Also, there exist positive constants $M_\G, M_\U, M_\f$ such that
\begin{align*}
\|\G(\x)\|_p \leq M_{\G}, \ \|\U(\x)\|_p \leq M_{\U} , \ \|\f(\x)\|_p \leq M_{\f}. 
\end{align*}
 
\end{assumption}
 
We note that typical choices of the activation functions like $\tanh$ or sigmoid fulfil this assumption.
Moreover, anticipating the nature of the estimates of the following section, we can state the following relaxation
of \Cref{ass:fjgi-bounded}.
\begin{remark} \label{add:remark:2}
If the states can be confined a priori to a bounded set $\Omega \subset \mathbb{R}^n$, then the Lipschitz and boundedness conditions need to be established on $\Omega$ only.
\end{remark}
 
%%%%%%%%%%%%%%%%%%%%%%%%%

\section{Exponential Lag Synchronization Results} \label{sec:Results}
In this section, we provide the main results of this manuscript. 
Before that, we are giving an important lemma which is useful to establish these results.

\begin{lemma} [\cite{work-ts-6}, Lemma 2]
\label{lemma-1}
For any real scalars $c$ and $d$ such that $c>d>0$ and $-c \in \mathcal{R}^+$, let $x(t)$ be a non-negative right-dense continuous function satisfying 
\begin{align*}
D^+_\Delta x(t) \leq -c x(t) + d \sup_{s \in [t-\t, t]_\mathbb{T}}x(s), \ t \in [0, \infty)_\mathbb{T}, 
\end{align*}
where 
$D^+_\Delta x(t)$ is the upper right Dini-delta-derivative of $x$ at $t$.
Then the inequality 
\begin{align*}
x(t) \leq \sup_{s \in [t-\t, t]_\mathbb{T}}x(s) e_{\ominus \lambda}(t,0),
\end{align*}
holds, where $\lambda>0$ is a solution of the inequality $\lambda + d \exp(\lambda \t)<c$.
\end{lemma}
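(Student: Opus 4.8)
The plan is to introduce the explicit comparison function $v(t):=\hat{x}(t_0)\,e_{\ominus\lambda}(t,t_0)$ and to prove, by a first-crossing argument, that $x(t)\le v(t)$ for all $t\ge t_0$, which is precisely the claimed estimate. First I would settle existence of the rate: with $F(\lambda):=\lambda-a+b\exp(\lambda\t)$, the map $F$ is continuous, strictly increasing on $[0,\infty)$, satisfies $F(0)=b-a<0$ and $F(\lambda)\to\infty$, so every $\lambda$ in a nonempty interval $(0,\lambda^\ast)$ obeys $\lambda+b\exp(\lambda\t)<a$; I fix one such $\lambda$. In particular $\lambda<a$, and since $-a\in\mathcal{R}^+$ yields $1-\mu(t)a>0$ and hence $\mu(t)\lambda<\mu(t)a<1$, the constant $\ominus\lambda$ is positively regressive, so $e_{\ominus\lambda}(\cdot,t_0)$ is well defined, positive, and nonincreasing.

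The decisive elementary estimate I would isolate is $\hat{v}(t)\le\exp(\lambda\t)\,v(t)$. Since $v$ is nonincreasing, $\hat{v}(t)=v(t-\t)$ and $v(t-\t)/v(t)=e_{\ominus\lambda}(t-\t,t)$. From the definition of the time-scale exponential together with the identity $\zeta_{\mu(z)}((\ominus\lambda)(z))=-\zeta_{\mu(z)}(\lambda)$, which follows from $1+\mu(\ominus\lambda)=(1+\mu\lambda)^{-1}$, and the pointwise bound $\zeta_{\mu(z)}(\lambda)=\frac{1}{\mu(z)}\log(1+\mu(z)\lambda)\le\lambda$, I obtain $e_{\ominus\lambda}(t-\t,t)=\exp\big(\int_{t-\t}^{t}\zeta_{\mu(z)}(\lambda)\,\Delta z\big)\le\exp(\lambda\t)$. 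I would also record the base estimate: for $s\in[t_0-\t,t_0]_\mathbb{T}$ one has $e_{\ominus\lambda}(s,t_0)\ge1$, so $v(s)\ge\hat{x}(t_0)\ge x(s)$.

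For the main step I would argue by contradiction with the inflated comparison $v_\epsilon:=(1+\epsilon)v$, $\epsilon>0$, setting $t_1:=\inf A$ for $A:=\{t\ge t_0:x(t)>v_\epsilon(t)\}$; the base estimate and rd-continuity give $t_1>t_0$. If $t_1\in A$, it must be left-scattered, and applying the hypothesis at $\rho(t_1)$ (where $x\le v_\epsilon$ and $\hat{x}(\rho(t_1))\le\exp(\lambda\t)\,v_\epsilon(\rho(t_1))$) to the step relation $x(t_1)=x(\rho(t_1))+\mu(\rho(t_1))\,D^+_\Delta x(\rho(t_1))$, together with $1-\mu(\rho(t_1))a>0$, reduces everything to the scalar inequality $\big(1-\mu a+\mu b\exp(\lambda\t)\big)(1+\mu\lambda)\le1$ with $\mu:=\mu(\rho(t_1))$; this holds because $b\exp(\lambda\t)<a-\lambda$ bounds its left side by $(1-\mu\lambda)(1+\mu\lambda)=1-\mu^2\lambda^2\le1$, yielding $x(t_1)\le v_\epsilon(t_1)$ and contradicting $t_1\in A$. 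If $t_1\notin A$, it must be right-dense, so $x(t_1)=v_\epsilon(t_1)$ and there is a sequence $t_n\downarrow t_1$ with $x(t_n)>v_\epsilon(t_n)$, giving $D^+_\Delta x(t_1)\ge v_\epsilon^\Delta(t_1)=-\lambda v_\epsilon(t_1)$; but the hypothesis and the key estimate force $D^+_\Delta x(t_1)\le v_\epsilon(t_1)\big(-a+b\exp(\lambda\t)\big)<-\lambda v_\epsilon(t_1)$, again a contradiction. Hence $x\le v_\epsilon$ on $[t_0,\infty)_\mathbb{T}$ for every $\epsilon>0$, and letting $\epsilon\to0^+$ gives $x(t)\le\hat{x}(t_0)\,e_{\ominus\lambda}(t,t_0)$.

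I expect the real difficulty to lie in the first-crossing analysis on a general (mixed) time scale, not in any single computation. On $\mathbb{R}$ one simply evaluates a derivative at a touching point, but here the upper right Dini-$\Delta$-derivative is a forward difference quotient at right-scattered points and an ordinary upper right derivative at right-dense ones, so the crossing $t_1$ must be resolved according to the local structure of $\mathbb{T}$---through its predecessor $\rho(t_1)$ in the scattered case and through a right-approaching sequence in the dense case---and one must invoke the left-sided continuity available for the (continuous) functions to which the lemma is applied in order to exclude upward left-jumps at left-dense points. Preserving strict inequalities through the parameter $\epsilon$ and the slack $1-\mu^2\lambda^2\le1$, so that the bound survives the limit $\epsilon\to0^+$, is the remaining point needing care.
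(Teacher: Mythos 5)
Your proposal cannot be checked against an internal argument: the paper states \Cref{lemma-1} as a known result imported from \cite{work-ts-6} (Lemma 2) and gives no proof of it, so your write-up stands or falls on its own. Taken on its own terms, it is correct, and it follows the natural route for Halanay-type estimates on time scales: fix $\lambda>0$ with $\lambda+b\exp(\lambda\t)<a$ (your monotonicity argument for $F(\lambda)=\lambda-a+b\exp(\lambda\t)$ is fine), set $v(t)=\hat{x}(t_0)\,e_{\ominus\lambda}(t,t_0)$, prove the key window estimate $\sup_{s\in[t-\t,t]_\mathbb{T}}v(s)\le\exp(\lambda\t)\,v(t)$ from $\zeta_{\mu}(\ominus\lambda)=-\zeta_{\mu}(\lambda)$ and $\log(1+u)\le u$, and run a first-crossing argument on $(1+\epsilon)v$ with the dichotomy dictated by the local structure of $\mathbb{T}$. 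Both case computations check out: at a left-scattered crossing, the step estimate $(1-\mu a+\mu b e^{\lambda\t})(1+\mu\lambda)<(1-\mu\lambda)(1+\mu\lambda)\le 1$ uses exactly $1-\mu a>0$, i.e.\ $-a\in\mathcal{R}^+$, which is where that hypothesis genuinely enters (note that $\ominus\lambda\in\mathcal{R}^+$ is automatic for any $\lambda>0$, since $1+\mu(\ominus\lambda)=(1+\mu\lambda)^{-1}>0$, so your detour through $\mu\lambda<\mu a<1$ is unnecessary, though harmless); at a right-dense crossing, $-\lambda v_\epsilon(t_1)\le D^+_\Delta x(t_1)\le(-a+b e^{\lambda\t})\,v_\epsilon(t_1)<-\lambda v_\epsilon(t_1)$ is a genuine contradiction because $v_\epsilon(t_1)>0$. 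The $\epsilon$-inflation correctly handles the passage from strict inequalities to the closed bound in the limit $\epsilon\to0^+$.

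One point you raise in passing deserves to be promoted to an explicit hypothesis, because it is a real gap in the lemma as literally stated: rd-continuity does not exclude an upward jump of $x$ at a left-dense, right-scattered point, and there the differential inequality gives no control. Concretely, on a time scale containing $[t_0-\t,1]\cup\{2,3,\dots\}$, the function with $x\equiv0$ on $[t_0-\t,1)$, $x(1)=c>0$, and values defined forward by equality in the recursion $x(\sigma(t))=x(t)+\mu(t)\bigl(-a\,x(t)+b\,\hat{x}\bigr)$ is rd-continuous (the left limit at $1$ exists), non-negative, and satisfies the hypothesis at every point, yet violates the conclusion at $t=1$ since $\hat{x}(t_0)=0$. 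So your claim ``if $t_1\in A$ it must be left-scattered'' genuinely requires continuity (or at least the absence of upward left jumps at left-dense, right-scattered points), exactly as you flag at the end. In the context of this paper the added assumption is harmless, since the lemma is only applied to $V(\e(t))=\|Z\e(t)\|_p$ along solutions of the error dynamics \eqref{eq:main-E}, which are continuous; but in a self-contained proof you should state it as a hypothesis of the lemma rather than as a closing remark.
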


%Throughout the manuscript we use the Tylor's formula with remainder of Peano, i.e., $\x(t+h) = \x(t) + h \x^\prime(t) + o(h)$, where $o(h)$ denotes the higher order terms of $h$. 

Now, we are ready to give the first main result of this article in the following theorem.

\begin{theorem} \label{main-theorem-1}
%\rc
%Let \Cref{ass:fjgi-bounded} hold. Then the drive system \eqref{eq:main-D} and response system \eqref{eq:main-R} are exponentially lag synchronized if
%for some $p \in \{1,2,\infty\}$, there exist a non-singular matrix $Z$, a control gain matrix $K$ such that 
%$\mathcal{M}_1^p - \mathcal{M}_2^p >0 $
% and $-\mathcal{M}_1^p \in \mathcal{R}^+$, 
%where 
%\begin{align*}
%\mathcal{M}_1^p & = - (M_p(-ZKZ^{-1},\mathbb{T}) + \|Z\|_p \|Z^{-1}\|_p ((M_\G L_\U + M_\U L_\G) \\
%& \quad + (M_\G L_\f + M_\f L_\G)\| \P\|_p + L_\G \|I\|_p) ),\\
%\mathcal{M}_2^p & = \|Z\|_p \|Z^{-1}\|_p (M_\G L_\f + M_\f L_\G)(\|\Q\|_p + \t \|\R\|_p)
%\end{align*} 
%and $M_p(\cdot,\mathbb{T})$ denotes the unified matrix-measure as defined in \Cref{mm-ts}.\\
  
Let \Cref{ass:fjgi-bounded} hold. If, for some $p \in \{1,2,\infty\}$, there exist a non-singular matrix $Z$ and a control gain matrix $K$ such that 
$\mathcal{M}_1^p - \mathcal{M}_2^p >0 $
 and $-\mathcal{M}_1^p \in \mathcal{R}^+$, 
where 
\begin{align*}
\mathcal{M}_1^p & = - \big ( M_p(-ZKZ^{-1},\mathbb{T}) + \|Z\|_p \|Z^{-1}\|_p  ((M_\G L_\U + M_\U L_\G) \\
& \quad + (M_\G L_\f + M_\f L_\G)\| \P\|_p + L_\G \|I\|_p) \big ),\\
\mathcal{M}_2^p & = \|Z\|_p \|Z^{-1}\|_p (M_\G L_\f + M_\f L_\G)(\|\Q\|_p + \t \|\R\|_p)
\end{align*} 
and $M_p(\cdot,\mathbb{T})$ denotes the unified matrix-measure as defined in \Cref{mm-ts},
then the drive system \eqref{eq:main-D} and the  response system \eqref{eq:main-R} are exponentially lag-synchronized. 
\end{theorem}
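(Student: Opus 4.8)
The plan is to reduce the assertion to the exponential stability of the error dynamics \eqref{eq:main-E}, since by \Cref{main:Def:ELS} the decay $\|\e(t)\|_p \le C e_{\ominus\nu}(t,t_0)$ is exactly the claimed lag synchronization. First I would remove the conjugation by diagonalizing the linear part via the change of variables $w(t) = Z\e(t)$, under which \eqref{eq:main-E} becomes $w^\Delta(t) = -ZKZ^{-1}w(t) + Zg(t)$, where $g(t)$ collects the five difference terms $\td\G(\e(t))\td\U(\e(t))$, $\td\G(\e(t))\P\td\f(\e(t))$, $\td\G(\e(t))\Q\td\f(\e(t-\t_1))$, $\td\G(\e(t))\R\int_{t-\t_2}^t\td\f(\e(s))\Delta s$ and $\td\G(\e(t))I$. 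This conjugation is precisely what produces the measure $M_p(-ZKZ^{-1},\mathbb{T})$ and the amplification factors $\|Z\|_p\|Z^{-1}\|_p$ that appear in $\mathcal{M}_1^p$ and $\mathcal{M}_2^p$.

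The central estimate is a matrix-measure bound for the upper right Dini-delta-derivative of $V(t):=\|w(t)\|_p$. Using the identity $w(\sigma(t)) = (\Id + \mu(t)(-ZKZ^{-1}))w(t) + \mu(t)Zg(t)$ together with submultiplicativity of the $p$-norm, I would establish, uniformly at right-scattered points (where $D^+_\Delta V(t) = (V(\sigma(t)) - V(t))/\mu(t)$) and at right-dense points (where the classical measure $\Lambda_p$ governs the limit), the inequality
\[
D^+_\Delta V(t) \le M_p(-ZKZ^{-1},\mathbb{T})\,V(t) + \|Z\|_p\,\|g(t)\|_p .
\]
I expect this uniform estimate — reading off exactly the definition in \Cref{mm-ts} across both the scattered and dense cases — to be the main obstacle, alongside the bookkeeping that routes the cross terms into the correct coefficient below.

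Next I would estimate $\|g(t)\|_p$ term by term from \Cref{ass:fjgi-bounded}. Writing each product difference in add-and-subtract form, for instance $\G(\y(t))\U(\y(t)) - \G(\x(t-\beta))\U(\x(t-\beta)) = \G(\y(t))[\U(\y(t)) - \U(\x(t-\beta))] + [\G(\y(t)) - \G(\x(t-\beta))]\U(\x(t-\beta))$, and applying the Lipschitz constants $L_\G,L_\U,L_\f$ and the bounds $M_\G,M_\U,M_\f$, each difference is controlled by values of $\|\e(\cdot)\|_p$. The $\U$, $\P$ and $I$ contributions depend only on the current error and furnish the coefficient $(M_\G L_\U + M_\U L_\G) + (M_\G L_\f + M_\f L_\G)\|\P\|_p + L_\G\|I\|_p$, whereas the $\Q$ contribution involves $\e(t-\t_1)$ and the $\R$ contribution involves $\int_{t-\t_2}^t\|\e(s)\|_p\Delta s \le \t\,\sup_{s\in[t-\t,t]_\mathbb{T}}\|\e(s)\|_p$. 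Converting $\|\e\|_p \le \|Z^{-1}\|_p\|w\|_p$ and majorizing every delayed term — and the residual current term generated by the $\G$-difference in the $\Q$ and $\R$ blocks — by $\hat V := \sup_{s\in[t-\t,t]_\mathbb{T}}V(s)$, which is admissible since $V(t)\le\hat V$, yields exactly
\[
D^+_\Delta V(t) \le -\mathcal{M}_1^p\,V(t) + \mathcal{M}_2^p\,\hat V ,\qquad t\in[t_0,\infty)_\mathbb{T}.
\]

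Finally, the hypotheses $\mathcal{M}_1^p - \mathcal{M}_2^p > 0$ (with $\mathcal{M}_2^p>0$ as a positive combination) and $-\mathcal{M}_1^p\in\mathcal{R}^+$ are precisely the conditions $a>b>0$ and $-a\in\mathcal{R}^+$ required by the Halanay-type \Cref{lemma-1} with $a=\mathcal{M}_1^p$ and $b=\mathcal{M}_2^p$. Invoking that lemma gives $V(t) \le \hat V(t_0)\,e_{\ominus\lambda}(t,t_0)$ for a $\lambda>0$ solving $\lambda + \mathcal{M}_2^p\exp(\lambda\t) < \mathcal{M}_1^p$. Undoing the change of variables through $\|\e(t)\|_p \le \|Z^{-1}\|_p V(t)$ then produces $\|\y(t) - \x(t-\beta)\|_p \le C e_{\ominus\nu}(t,t_0)$ with $\nu=\lambda$ and a suitable constant $C$, which is the asserted exponential lag synchronization.
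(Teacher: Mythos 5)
Your proposal is correct and follows essentially the same route as the paper's own proof: your change of variables $w(t)=Z\e(t)$ is just the paper's Lyapunov function $V(\e(t))=\|Z\e(t)\|_p$ in disguise, and you reproduce its two-case Dini-derivative estimate (right-scattered points via $\e(\sigma(t))=\e(t)+\mu(t)\e^\Delta(t)$, right-dense points via the classical $o(h)$ limit), the same add-and-subtract Lipschitz bounds from \Cref{ass:fjgi-bounded} on the five difference terms of \eqref{eq:main-E}, and the same invocation of the time-scale Halanay inequality (\Cref{lemma-1}) with $a=\mathcal{M}_1^p$, $b=\mathcal{M}_2^p$, followed by undoing the norm via $\|\e(t)\|_p\le\|Z^{-1}\|_p V(\e(t))$. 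There is no substantive difference in decomposition, key lemma, or generality.
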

%%%%%%%%%%%%%%%%%%%%%%%%%%
\begin{proof} 
For any non-singular matrix $Z$, we define 
\begin{align*}
V(\e(t)) = \| Z \e(t)\|_p.
\end{align*}
Now, for any arbitrary point $t \in \mathbb{T}$, from the definition of $\mu(t)$, we have either $\mu(t) =0$ or $\mu(t)>0$.  
Therefore, we split the proof into the following two steps:\\
\textbf{Step 1:} When $\mu(t)>0$, then for any $t \in \mathbb{T}$, we have
\begin{align}
   \dfrac{\| Z \e(\sigma(t)) \|_p  -\| Z \e(t) \|_p  }{\mu(t)} & = \dfrac{1}{\mu(t)} \bigg \{ \|Z \e(t) + \mu(t) Z \e^\Delta(t) \|_p  -\| Z \e(t) \|_p  \bigg \} \nonumber\\
%%%%%%%%%%%%%%%%%%%%%%%%%%%%%%%%%%%%%55
&  = \dfrac{1}{\mu(t)} \bigg \{  \| Z \e(t) + \mu(t) Z(-K\e(t) - \td\G(\e(t))[\td\U(\e(t)) - \P \td \f(\e(t)) \nonumber \\ 
& \quad \quad - \Q \td \f(\e(t-\t_1)) - \R \int_{t-\t_2}^t \td \f(\e(s)) \Delta s - I] )  \|_p - \|Z\e(t)\|_p \bigg \} \nonumber \\
%%%%%%%%%%%%%%%%%%%%%%%%%%%%%%%%%%%%%%%%
&  \leq \dfrac{1}{\mu(t)} \big \{ \|Z \e(t) + \mu(t)(-ZK)\e(t)\|_p -\| Z \e(t) \|_p  \big \} \nonumber\\
& \quad \quad + \|Z \td\G(\e(t))\td\U(\e(t))\|_p + \|Z \td\G(\e(t))\P\td \f(\e(t))\|_p  + \|Z \td\G(\e(t)) I\|_p \nonumber \\
& \quad \quad +  \|Z \td\G(\e(t)) \Q\td \f(\e(t-\t_1))\|_p
+ \| Z \td\G(\e(t)) \R \int_{t-\t_2}^t \td \f(\e(s))\Delta s   \|_p. \label{eq:eq-cal-1}
\end{align}
Now, from the definition of $\td \G, \td\U, \td \f$ and 
% \rc the 
  \Cref{ass:fjgi-bounded}, we have 
\begin{align} \label{eq:eq-cal-2}
\|\td \G(\e(t)) \td \U(\e(t))\|_p & = \|\G(\y(t))\U(\y(t)) -\G(\x(t-\beta))\U(\x(t-\beta))\|_p \nonumber \\
& \leq \|\G(\y(t))\U(\y(t)) - \G(\y(t))\U(\x(t-\beta)) \|_p  \nonumber \\ 
&\quad + \|\G(\y(t))\U(\x(t-\beta)) -\G(\x(t-\beta))\U(\x(t-\beta))\|_p \nonumber \\ 
& \leq (M_\G L_\U + M_\U L_\G) \| \e(t)\|_p . 
\end{align}
Similarly,  one can obtain
\begin{align} \label{eq:eq-cal-3}
\| \td \G(\e(t)) \P \td \f(\e(t))\|_p & = \| \G(\y(t)) \P \f(\y(t)) -\G(\x(t-\beta)) \P \f(\x(t-\beta))\|_p \nonumber \\
& \leq (M_\G L_\f + M_\f L_\G) \|\P\|_p \| \e(t)\|_p,
\end{align}
\begin{align} \label{eq:eq-cal-4}
\| \td \G(\e(t)) \Q \td \f(\e(t-\t_1))\|_p \leq (M_\G L_\f + M_\f L_\G) \|\Q\|_p \sup_{s \in [t-\t_1 ,t]_\mathbb{T}} \| \e(s)\|_p,
\end{align}
\begin{align} \label{eq:eq-cal-5}
\bigg \| \td \G(\e(t)) \R \int_{t-\t_2}^t \td \f(\e(s)) \Delta s \bigg \|_p \leq \t (M_\G L_\f + M_\f L_\G) \|\R\|_p \sup_{s \in [t-\t_2 ,t]_\mathbb{T}} \| \e(s)\|_p
\end{align}
and 
\begin{align} \label{eq:eq-cal-6}
\| \td \G(\e(t)) I \|_p \leq L_\G \|I\|_p \| \e(t)\|_p.
\end{align}
Now, from the inequalities \eqref{eq:eq-cal-1}, \eqref{eq:eq-cal-2}, \eqref{eq:eq-cal-3}, \eqref{eq:eq-cal-4}, \eqref{eq:eq-cal-5} and \eqref{eq:eq-cal-6}, we get
\begin{align*}
  \dfrac{\|Z \e(\sigma(t)) \|_p  -\| Z \e(t) \|_p}{\mu(t)} & \leq   \dfrac{\| \Id + \mu(t) (-ZKZ^{-1})\|_p -1 }{\mu(t)} \|Z\e(t) \|_p + \|Z\|_p L_\G \|I\|_p  \|\e(t)\|_p
\\
& \quad \quad + \|Z\|_p (M_\G L_\U + M_\U L_\G) \| \e(t)\|_p   \\
& \quad \quad +  \|Z\|_p (M_\G L_\f + M_\f L_\G) \|\P\|_p \|\e(t)\|_p    \\
& \quad  \quad+ \|Z\|_p (M_\G L_\f + M_\f L_\G) \|\Q\|_p  \sup_{s \in [t-\t_1 ,t]_\mathbb{T}} \|Z \e(s)\|_p \\
& \quad \quad + \t \|Z\|_p (M_\G L_\f + M_\f L_\G) \|\R\|_p \sup_{s \in [t-\t_2 ,t]_\mathbb{T}} \|\e(s)\|_p \\
%%%%%%%%%%%%%%%%%%%%%%%%%%%%%%%%%%%%%%%%%%%
&  \leq  ( M_p(-ZK\P^{-1},\mathbb{T}) + \|Z\|_p L_\G \|I\|_p \|Z^{-1}\|_p  \| Z\e(t)\|_p \\
& \quad \quad + \|Z\|_p (M_\G L_\U + M_\U L_\G) \|Z^{-1}\|_p  \| Z \e(t)\|_p   \\
& \quad \quad +  \|Z\|_p (M_\G L_\f + M_\f L_\G) \|\P\|_p \| Z^{-1}\|_p  \|Z \e(t)\|_p    \\
& \quad \quad + \|Z\|_p (M_\G L_\f + M_\f L_\G) \|\Q\|_p \| Z^{-1}\|_p \sup_{s \in [t-\t_1 ,t]_\mathbb{T}} \|Z \e(s)\|_p \\
& \quad \quad + \t \|Z\|_p (M_\G L_\f + M_\f L_\G) \|\R\|_p \|Z^{-1}\|_p \sup_{s \in [t-\t_2 ,t]_\mathbb{T}} \| Z\e(s)\|_p \\
&  \leq -\mathcal{M}_1^p \|Z \e(t)\|_p  + \mathcal{M}_2^p \sup_{s \in [t-\t, t]_\mathbb{T}} \|Z \e(s)\|_p.
\end{align*}
Hence, using \Cref{def:delta}, we get 
\begin{align} \label{eq:eq-cal-7}
D^+_\Delta V(\e(t))  
%\rc = 
  \leq   -\mathcal{M}_1^p V(\e(t)) + \mathcal{M}_2^p \sup_{s \in [t-\t, t]_\mathbb{T}} V(\e(s)).
\end{align}
\noindent \textbf{Step 2:} When $\mu(t)=0$, the derivative is the classical derivative, therefore, by using the formula $\x(t+h) = \x(t) + \x'(t)h + o(h)$ with $\lim _{h\to 0} \frac{\|o(h)\|_p}{h} = 0$, we can calculate
\begin{align*}
  \lim_{h \to 0^+} \dfrac{\|Z \e(t+h) \|_p -\| Z \e(t) \|_p }{h}  &  = \lim_{h \to 0^+}  \dfrac{1}{h} \bigg \{ \| Z \e(t) + h Z \e^\Delta(t) + o(h) \|_p -\| Z \e(t) \|_p  \bigg \} \\
%%%%%%%%%%%%%%%%%%%%%%%%%%%%%%%%%%%%%55
& = \lim_{h \to 0^+}  \dfrac{1}{h} \bigg \{ \|Z \e(t) + h Z(-K\e(t) - \td\G(\e(t))[\td\U(\e(t)) - \P \td \f(\e(t)) \nonumber \\ 
& \quad \quad  - \Q \td \f(\e(t-\t_1)) - \R \int_{t-\t_2}^t \td \f(\e(s)) \Delta s - I] ) + o(h) \|_p - \|Z\e(t)\|_p \bigg \} \nonumber \\
%%%%%%%%%%%%%%%%%%%%%%%%%%%%%%%%%%%%%%%5
&  \leq  ( M_p(-ZK\P^{-1},\mathbb{T}) + \|Z\|_p L_\G \|I\|_p \|Z^{-1}\|_p  \| Z\e(t)\|_p \\
& \quad \quad  + \|Z\|_p (M_\G L_\U + M_\U L_\G) \|Z^{-1}\|_p  \| Z \e(t)\|_p   \\
& \quad \quad +  \|Z\|_p (M_\G L_\f + M_\f L_\G) \|\P\|_p \| Z^{-1}\|_p  \|Z \e(t)\|_p    \\
& \quad \quad + \|Z\|_p (M_\G L_\f + M_\f L_\G) \|\Q\|_p \| Z^{-1}\|_p \sup_{s \in [t-\t_1 ,t]_\mathbb{T}} \|Z \e(s)\|_p \\
& \quad \quad + \t \|Z\|_p (M_\G L_\f + M_\f L_\G) \|\R\|_p \|Z^{-1}\|_p \sup_{s \in [t-\t_2 ,t]_\mathbb{T}} \| Z\e(s)\|_p \\
&  \leq -\mathcal{M}_1^p \|Z \e(t)\|_p  + \mathcal{M}_2^p \sup_{s \in [t-\t, t]_\mathbb{T}} \|Z \e(s)\|_p.
\end{align*}
Hence, using \Cref{def:delta} again, we get the same inequality as \eqref{eq:eq-cal-7}.
%\begin{align*}
%D^+ _\Delta V(t) = -\mathcal{M}_1^p V(t) + \mathcal{M}_2^p \sup_{s \in [t-\t, t]_\mathbb{T}} V(s) +  \|\P\|_p c^*.
%\end{align*}

Thus, from the above two steps, for any $t \in \mathbb{T}$, we have
\begin{align*}
D^+ _\Delta V(\e(t))  
%\rc = 
  \leq   -\mathcal{M}_1^p V(\e(t)) + \mathcal{M}_2^p \sup_{s \in [t-\t, t]_\mathbb{T}} V(\e(s)).
\end{align*}
Therefore, from  \Cref{lemma-1}, we get 
\begin{align*}
V(\e(t)) \leq  \sup_{s \in [t-\t,t]_\mathbb{T}} V(\e(s)) e_{\ominus \lambda} (t,0),
\end{align*}
where $\lambda$ is the solution of $\lambda +  \mathcal{M}_2^p \exp(\lambda \t) \leq \mathcal{M}_1^p$.
Further, it is clear that 
\begin{align*}
\| \e(t) \|_p & = \| Z^{-1} Z \e(t) \|_p \\
& \leq \|Z^{-1}\|_p \|V(\e(t))\|_p \\
& \leq \|Z^{-1}\|_p \sup_{s \in [t-\t,t]_\mathbb{T}} V(\e(s)) e_{\ominus \lambda }(t,0) \\
& \leq C e_{\ominus \lambda }(t,0),
\end{align*}
where $C= \| Z\|_p \|Z^{-1}\|_p \sup_{s \in [t-\t,t]_\mathbb{T}} \|\e(s)\|>0$. 
 Hence, from \Cref{main:Def:ELS}, the error 
 %\rc dynamic 
   system   \eqref{eq:main-E} is exponentially stable, and hence, 
the drive system \eqref{eq:main-D} and the response system \eqref{eq:main-R} are exponentially lag-synchronized. 
\end{proof}
%%%%%%%%%%%%%%%%%%%%%%%%%%%%%%%%%%%%%%%%%%%%
\begin{remark} \label{remark:p:id}
%One could notice that in \Cref{main-theorem-1}, the matrix $Z$ is non-singular, so 
By choosing $Z=\Id$, the constants $\mathcal{M}_1^p$ and $\mathcal{M}_2^p$ of \Cref{main-theorem-1} become
\begin{align*}
\mathcal{M}_1^p & = - (M_p(-K,\mathbb{T}) + (M_\G L_\U + M_\U L_\G) + (M_\G L_\f + M_\f L_\G)\| \P\|_p + L_\G \|I\|_p), \\
\mathcal{M}_2^p & = (M_\G L_\f + M_\f L_\G)(\|\Q\|_p + \t \|\R\|_p).
\end{align*} 
\end{remark}
Next, we consider a particular case of the considered problem by setting   
$\G(\x(t)) = \Id$ and $\U(\x(t)) = \A \x(t)$, where $\A=\diag\{\a_1,\a_2,\ldots,\a_n \} \in \mathbb{R}^{n \times n}$ with $\a_i>0, i=1,2,\ldots,n$,
 then the drive system \eqref{eq:main-D} and the response system  \eqref{eq:main-R} become 
\begin{align} \label{eq:main-D-NN}
\begin{cases}
\x^\Delta(t) & =  - \A \x(t) + \P \f(\x(t)) + \Q \f(\x(t-\t_1)) + \R \int_{t-\t_2}^t \f(\x(s)) \Delta s + I, \  t \in [0,\infty)_\mathbb{T},\\
\x(s)  & = \phi(s), \ s \in [-\t , 0]_\mathbb{T}
\end{cases}
\end{align}
and 
\begin{align} \label{eq:main-R-NN}
\begin{cases}
\y^\Delta(t)  & = - \A \y(t) + \P \f(\y(t)) + \Q \f(\y(t-\t_1)) + \R \int_{t-\t_2}^t \f(\y(s)) \Delta s  + I + u(t), \ t \in [0,\infty)_\mathbb{T},\\
\y(s)  & = \psi(s), \ s \in [-\t , 0]_\mathbb{T}, 
\end{cases}
\end{align}
respectively. 
Also, the error system \eqref{eq:main-E} becomes
\begin{align} \label{eq:main-E-NN}
\e^\Delta(t)  = -(\A+K)\e(t)  + \P \hat \f(\e(t)) + \Q \hat \f(\e(t-\t_1)) + \R \int_{t-\t_2}^t \hat \f(\e(s)) \Delta s ,
\end{align}
where $\hat \f(\e(\cdot)) = \f(\y(\cdot)) -\f(\x(\cdot-\beta))$.

\begin{remark} One could have a remark similar to \Cref{remark:ts:cases} for the drive system \eqref{eq:main-D-NN} and the response system \eqref{eq:main-R-NN}.
\end{remark}

Now, we will give some sufficient conditions for the exponential lag synchronization for the systems \eqref{eq:main-D-NN}--\eqref{eq:main-R-NN} as follows.

\begin{theorem} \label{main-theorem-1-NN}
%\rc 
%Let \Cref{ass:fjgi-bounded} hold. Then the drive system \eqref{eq:main-D-NN} and response system \eqref{eq:main-R-NN} are exponentially lag synchronized
%if for some $p \in \{1,2,\infty\}$, there exist a non-singular matrix $Z$, a control gain matrix $K$ such that 
%$\mathcal{M}_3^p - \mathcal{M}_4^p >0 $
% and $-\mathcal{M}_3^p \in \mathcal{R}^+$, 
%where 
%\begin{align*}
%\mathcal{M}_3^p & = - (M_p(-Z(\A+K)Z^{-1},\mathbb{T}) + \|Z\|_p \|Z^{-1}\|_p \|\P\|_p L_\f  ),\\
%\mathcal{M}_4^p & = \|Z\|_p \|Z^{-1}\|_p L_\f(\|\Q\|_p + \t \|\R\|_p).
%\end{align*} 
 
Let
$\f$ satisfy the Lipschitz and bounded conditions as stated in \Cref{ass:fjgi-bounded}. If, for some $p \in \{1,2,\infty\}$, there exist a non-singular matrix $Z$ and a control gain matrix $K$ such that 
$\mathcal{M}_3^p - \mathcal{M}_4^p >0 $
 and $-\mathcal{M}_3^p \in \mathcal{R}^+$, 
where 
\begin{align*}
\mathcal{M}_3^p & = - (M_p(-Z(\A+K)Z^{-1},\mathbb{T}) + \|Z\|_p \|Z^{-1}\|_p \|\P\|_p L_\f  ),\\
\mathcal{M}_4^p & = \|Z\|_p \|Z^{-1}\|_p L_\f(\|\Q\|_p + \t \|\R\|_p),
\end{align*}
then the drive system \eqref{eq:main-D-NN} and response system \eqref{eq:main-R-NN} are exponentially lag-synchronized. 
 
\end{theorem}
\begin{proof}
For any non-singular matrix $Z$, we define 
\begin{align*}
V(\e(t)) = \| Z \e(t)\|_p.
\end{align*}
  Similar to the proof of \Cref{main-theorem-1}, we
%\rc Now, 
  consider the following two steps:\\
\textbf{Step 1:} When $\mu(t)>0$, then for any $t \in \mathbb{T}$, we have
\begin{align*}
   \dfrac{\| Z \e(\sigma(t)) \|_p -\| Z \e(t) \|_p  }{\mu(t)} 
 &  = \dfrac{1}{\mu(t)} \bigg \{ \|Z \e(t) + \mu(t) Z \e^\Delta(t) \|_p  -\| Z \e(t) \|_p  \bigg \} \nonumber\\
%%%%%%%%%%%%%%%%%%%%%%%%%%%%%%%%%%%%%55
&  = \dfrac{1}{\mu(t)} \bigg \{  \| Z \e(t) + \mu(t) Z(-(\A+K)\e(t)  + \P \hat \f(\e(t)) + \Q \hat \f(\e(t-\t_1)) \\
& \quad \quad + \R \int_{t-\t_2}^t \hat \f(\e(s)) \Delta s )  \|_p - \|Z\e(t)\|_p \bigg \} \nonumber \\
%%%%%%%%%%%%%%%%%%%%%%%%%%%%%%%%%%%%%%%%
&  \leq \dfrac{1}{\mu(t)} \big \{ \|Z \e(t) + \mu(t)(-Z(\A+K))\e(t)\|_p -\| Z \e(t) \|_p  \big \} + \|Z \P \hat \f(\e(t)) \|_p\nonumber\\
& \quad \quad  + \|Z \Q \hat \f(\e(t-\t_1))\|_p  + \|Z \R \int_{t-\t_2}^t \hat \f(\e(s)) \Delta s  \|_p \nonumber   \\ 
& \quad \leq -\mathcal{M}_3^p \|Z \e(t)\|_p  + \mathcal{M}_4^p \sup_{s \in [t-\t, t]_\mathbb{T}} \|Z \e(s)\|_p. 
\end{align*}
Hence, from \Cref{def:delta}, we get 
\begin{align} \label{eq:eq-cal-7-NN}
D^+_\Delta V(\e(t)) 
%\rc =
   \leq   -\mathcal{M}_3^p V(\e(t)) + \mathcal{M}_4^p \sup_{s \in [t-\t, t]_\mathbb{T}} V(\e(s)).
\end{align}
\noindent \textbf{Step 2:} When $\mu(t)=0$, then, using the same analysis as in Step 1, we get  
\begin{align*}
\lim_{h \to 0^+} \dfrac{\|Z \e(t+h) \|_p -\| Z \e(t) \|_p }{h} 
&  = \lim_{h \to 0^+}  \dfrac{1}{h} \bigg \{ \| Z \e(t) + h Z \e^\Delta(t) + o(h) \|_p -\| Z \e(t) \|_p  \bigg \} \\
%%%%%%%%%%%%%%%%%%%%%%%%%%%%%%%%%%%%%55
&  \leq \lim_{h \to 0^+}  \dfrac{1}{h} \bigg \{ \|Z \e(t) + h Z(-(\A+K)\e(t)  + \P \hat \f(\e(t)) \\
& \quad \quad + \Q \hat \f(\e(t-\t_1))  + \R \int_{t-\t_2}^t \hat \f(\e(s)) \Delta s ) + o(h)  \|_p - \|Z\e(t)\|_p \bigg \} \nonumber \\
%%%%%%%%%%%%%%%%%%%%%%%%%%%%%%%%%%%%%%%
&  \leq -\mathcal{M}_3^p \|Z \e(t)\|_p  + \mathcal{M}_4^p \sup_{s \in [t-\t, t]_\mathbb{T}} \|Z \e(s)\|_p.
\end{align*}
Hence, using \Cref{def:delta} again, we get the same inequality as \eqref{eq:eq-cal-7-NN}.

Thus, from the above two steps, for any $t \in \mathbb{T}$, we have
\begin{align*}
D^+ _\Delta V(\e(t))  
% \rc = 
  \leq   -\mathcal{M}_3^p V(\e(t)) + \mathcal{M}_4^p \sup_{s \in [t-\t, t]_\mathbb{T}} V(\e(s)).
\end{align*}
Therefore, from  \Cref{lemma-1}, we get 
$
V(\e(t)) \leq  \sup_{s \in [t-\t,t]_\mathbb{T}} V(\e(s)) e_{\ominus \lambda} (t,0),
$
where $\lambda$ is the solution of $\lambda +  \mathcal{M}_4^p \exp(\lambda \t) \leq \mathcal{M}_3^p$.
Further, it is clear that 
$\| \e(t) \|_p  = \| Z^{-1} Z \e(t) \|_p \leq  C e_{\ominus \lambda }(t,0),
$
where $C= \| Z\|_p \|Z^{-1}\|_p \sup_{s \in [t-\t,t]_\mathbb{T}} \|\e(s)\|>0$. 
 Hence, from \Cref{main:Def:ELS}, the error 
 %\rc dynamics
    system   \eqref{eq:main-E} is exponentially stable, and hence, 
the drive system \eqref{eq:main-D} and the response system \eqref{eq:main-R} are exponentially lag-synchronized. 
\end{proof}
%%%%%%%%%%%%%%%%%%%%%%%%%%%%%%%%%%%%%%%%%%%%%%%%%%%%%%%%%%%%%%%%%%%%%%%%%
\begin{remark} \label{remark:p:id:NN}
Similar to \Cref{remark:p:id}, by choosing $Z=\Id$, the constants $\mathcal{M}_3^p$ and $\mathcal{M}_4^p$ of \Cref{main-theorem-1-NN} become
\begin{align*}
\mathcal{M}_3^p  = - (M_p(-(\A+K),\mathbb{T}) +  \|\P\|_p L_\f)  , \ \mathcal{M}_4^p  = L_\f(\|\Q\|_p + \t \|\R\|_p).
\end{align*} 
\end{remark}
%%%%%%%%%%%%%%%%%%%%%%%%%%%%%%%%%%%%%%%%%%%%%%%%%%
\begin{remark}
In the case when there is no distributed time-delay in the systems \eqref{eq:main-D}--\eqref{eq:main-R} (or \eqref{eq:main-D-NN}--\eqref{eq:main-R-NN}), i.e., when $\t_2 = 0$, then one
can establish all the above results by setting the corresponding terms to zero in the computation of
the constants $\mathcal{M}_1^p$ and $\mathcal{M}_2^p$ (or $\mathcal{M}_3^p$ and $\mathcal{M}_4^p$). 
\end{remark}
%%%%%%%%%%%%%%%%%%%%%%%%%%
\begin{remark}
The results of  \Cref{main-theorem-1} and \Cref{main-theorem-1-NN} cover the problem in all generality, 
therefore, one can obtain the results for particular time domains, such as the continuous-time domain (when $\mathbb{T}=\mathbb{R}$) and discrete-time domain (when  $\mathbb{T}= \mathbb{Z}$), by replacing the matrix-measures evolves in the constants $\mathcal{M}_1^p,\mathcal{M}_2^p,\mathcal{M}_3^p$ and $\mathcal{M}_4^p$ from the known \Cref{mm-real}.    
\end{remark}

%%%%%%%%%%%%%%%%%%%%%%%%%%%%%%%%%%%%%%%%%%%%%%%%%%%%%%%%%%%%%%%
\begin{remark}
For the continuous-time domain, few authors reported the synchronization results for the C-GNNs with mixed delays \cite{work-lag-synchro-2,syn-lag, syn-finite, syn-adaptive}.
Particularly, in \cite{syn-lag}, the authors considered a class of
C-GNNs with mixed delays and studied the exponential lag synchronization via periodically intermittent control and mathematical induction technique.
In \cite{syn-finite}, the authors studied finite-time synchronization of C-GNNs with mixed delays by using the Lyapunov-Krasovskii functional approach.
Furthermore, there are only a few authors who studied the synchronization problem of the discrete-time C-GNNs \cite{CG-D-3,CG-D-7}. 
In particular, the authors in \cite{CG-D-7}, studied the exponential synchronization results for an array of coupled discrete-time
C-GNNs with time-dependent delay by applying the Lyapunov-Krasovskii functional approach while in \cite{CG-D-3}, the authors investigated the existence of a bounded unique solution, exponential stability, and synchronization by using some fixed point techniques and inequality techniques.
\end{remark}

\begin{remark}
All the results obtained on continuous-time \cite{work-lag-synchro-2,syn-lag, syn-finite, syn-adaptive} and discrete-time \cite{CG-D-3,CG-D-7} C-GNNs  are studied separately.
The continuous-time or discrete-time C-GNNs results cannot be directly applied and easily extended to the case of arbitrary time C-GNNs. 
And,  there is no manuscript on the continuous-time or discrete-time domain which discussed the exponential lag synchronization results for the C-GNNs with mixed delays
by using the matrix-measure and Halanay inequality, therefore, the results of this manuscript are completely new even for the continuous case ($\mathbb{T} = \mathbb{R}$) and discrete case ($\mathbb{T} = \mathbb{Z}$). 
\end{remark}
%%%%%%%%%%%%%%%%%%%%%%%%%%%%%
%%%%%%%%%%%%%%%%%%%%%%%%%%%%%

%%%%%%%%%%%%%%%%%%%%%%%%%%%%%%%%%%%%%%%555
%\begin{remark}
%Exponential H Synchronization of General Discrete-Time Chaotic Neural Networks With or Without Time Delays

%Impulsive Stabilization and Impulsive Synchronization of Discrete-Time Delayed Neural Networks

% Stability and Synchronization of Discrete-Time Neural Networks With Switching Parameters and Time-Varying Delays

% Synchronization of discrete-time recurrent neural networkswith time-varying delays via quantized sliding mode control

%\end{remark}
%%%%%%%%%%%%%%%%%%%%%%%%%%%%%%%%%%%%%%%%%%%%%%%%%%%%%%%%%%%%%%%%%%%%%%%%%%%%%%%%%%%%%%%%%%%%%%%%%%%%%%%%%%%%%%%%%%%%%%%%%%%%%%%%%%%%%%%%%%%%%%%%%%%%%%%%%%%%%%%%%%%%%%%%%%%%%%%%%%%%%%%%%%%%%%%%%%%%%%%%%
\section{ Illustrated Examples} \label{sec:examples}
 
In this section, we provide two examples to illustrate the obtained results for different time domains. Whereas the first example is tailored to best illustrate the potentials of our theoretical results with respect
to arbitrary time domains, the second example is borrowed from \cite{add-3} to show the general applicability of our methods.
 
\begin{example} \label{ex-1}
Consider the drive system \eqref{eq:main-D} and response system \eqref{eq:main-R} with the following coefficients
\begin{align*}
& \G(\x(t)) = \begin{bmatrix}
0.4 + 0.2 \cos(\x_1(t)) & 0.0 \\
0.0 & 0.4-0.2 \sin(\x_2(t))
\end{bmatrix}, \ 
\U(\x(t)) = \begin{bmatrix}
0.3 + 0.2 \sin(\x_1(t)) \\
0.3 - 0.2 \cos(\x_2(t))
\end{bmatrix},\\
&
\P = \begin{bmatrix}
0.8 & 0.0 \\
-0.2 & -0.7
\end{bmatrix}, \ 
\Q = \begin{bmatrix}
-0.4 & 0.1 \\
-0.2 & 0.5
\end{bmatrix},
\ 
\R = \begin{bmatrix}
-0.5 & 0.6 \\ 
-0.6 & 0.5
\end{bmatrix}, \ 
I = \begin{bmatrix}
0.4 \\ 
0.3
\end{bmatrix}, \\
&
\f(\x(t))=
\begin{bmatrix}
0.8 \tanh (\x_1(t)) \\ 0.8 \tanh (\x_2(t)) 
\end{bmatrix},
\ 
\phi(s) = 
\begin{bmatrix}
0.5 \\ 1 
\end{bmatrix},\
\psi(s) =
\begin{bmatrix}
-1 \\ -0.5 
\end{bmatrix} \ \text{for }
s \in  [-\t,0]_\mathbb{T}, \ Z=\Id.
\end{align*}
\end{example}

One can confirm that for \Cref{ex-1}, $\G, \U$, and $\f$ satisfy
\Cref{ass:fjgi-bounded} with $L_\G = L_\U = 0.2, L_\f =M_\f = 0.8, M_\G =0.6, M_\U = 0.5$.
Now, we consider the following three different time domains as follows.\\
\noindent \textbf{Case 1.} $\mathbb{T} = \mathbb{R}$.
 Let $\t_1 = 0.5, \t_2=0.8$ and $\beta = 0.4$. Here, $\t=0.8$ and the graininess function $\mu(t)=0$ for all  $ t \in \mathbb{R}$.
The state trajectories and the error trajectories of the systems \eqref{eq:main-D}--\eqref{eq:main-R} without feedback control are shown in Fig. \ref{fig:1} and Fig. \ref{fig:2}, respectively. Clearly, from Fig. \ref{fig:1} and Fig. \ref{fig:2}, the drive system \eqref{eq:main-D} and the response system \eqref{eq:main-R} are not synchronized.  
\newlength\figureheight
\newlength\figurewidth
\setlength\figureheight{4cm}
\setlength\figurewidth{8cm}
\graphicspath{{plot-real/}}
\begin{figure}
\centering
\begin{minipage}{.5\textwidth}
  \centering
  \includegraphics[width=1\linewidth]{2x2Uncoupled-System}
  \caption{Uncoupled synchronization curves \\ when $\mathbb{T} = \mathbb{R}$ }
  \label{fig:1}
\end{minipage}%
\begin{minipage}{.5\textwidth}
  \centering
  \includegraphics[width=1.\linewidth]{2x2Uncoupled-System-lse}
  \caption{Uncoupled synchronization error \\ curves  when $\mathbb{T} = \mathbb{R}$}
  \label{fig:2}
\end{minipage}
\end{figure}
However, for the control gain matrix
$$ 
K = \begin{bmatrix}
2.2 & 0.0\\
0.0 & 2.2
\end{bmatrix},$$ 
we can calculate 
$$
\mathcal{M}_2^1 = 0.9472, \ \mathcal{M}_2^2 = 0.9542, \ 
\mathcal{M}_2^\infty = 1.0112 
$$
and 
\begin{align*}
  \Lambda_1(-K) &= -2.2000, \ \Lambda_2(-K) = -4.4000,\    \Lambda_\infty(-K) = -2.2000.
\end{align*}  
 Hence, 
 $$
 \mathcal{M}_1^1  = 0.7800 ,\quad \mathcal{M}_1^2 = 3.2242, \quad \text{and } \mathcal{M}_1^\infty = 1.0840.
 $$
Therefore, we can see that 
$\mathcal{M}_1^1 - \mathcal{M}_2^1 = -0.1672 <0,$  
$\mathcal{M}_1^2 - \mathcal{M}_2^2 = 2.2700 >0,$ 
and  
$\mathcal{M}_1^\infty - \mathcal{M}_2^\infty =  0.0728 >0$. 
Also, $-\mathcal{M}_1^2, -\mathcal{M}_1^\infty \in \mathcal{R}^+$.
Hence, for $p=2, \infty$, all the conditions of \Cref{main-theorem-1} hold,
and 
thus, the systems \eqref{eq:main-D}--\eqref{eq:main-R} with feedback control \eqref{eq:control} are exponentially lag-synchronized with the maximum
rate of convergence for $p = 2,\infty$ are $1.0366$ and $0.0394$, respectively.
The synchronized curves and synchronized errors curves with feedback control are shown in  Fig. \ref{fig:3} and Fig. \ref{fig:4}, respectively.
\begin{figure}
\centering
\begin{minipage}{.5\textwidth}
  \centering
  \includegraphics[width=1\linewidth]{2x2Coupled-System}
  \caption{Coupled synchronization curves \\ when $\mathbb{T} = \mathbb{R}$}
  \label{fig:3}
\end{minipage}%
\begin{minipage}{.5\textwidth}
  \centering
  \includegraphics[width=1.\linewidth]{2x2Coupled-System-lse}
  \caption{Coupled synchronization error curves \\ when $\mathbb{T} = \mathbb{R}$}
  \label{fig:4}
\end{minipage}
\end{figure}

\noindent \textbf{Case 2.} $\mathbb{T} = 0.5 \mathbb{Z}$.
 Let $\t_1=\t_2=\beta = 0.5$. 
Here, $\t=0.5$ and the graininess function $\mu(t)=0.5$ for all  $ t \in \mathbb{R}$.
The state trajectories and the error trajectories of the systems \eqref{eq:main-D}--\eqref{eq:main-R} without feedback control are shown in Fig. \ref{fig:5} and Fig. \ref{fig:6}, respectively which are clearly not synchronized.  
\graphicspath{{plot-discrete/}}
\begin{figure}
\centering
\begin{minipage}{.5\textwidth}
  \centering
  \includegraphics[width=1\linewidth]{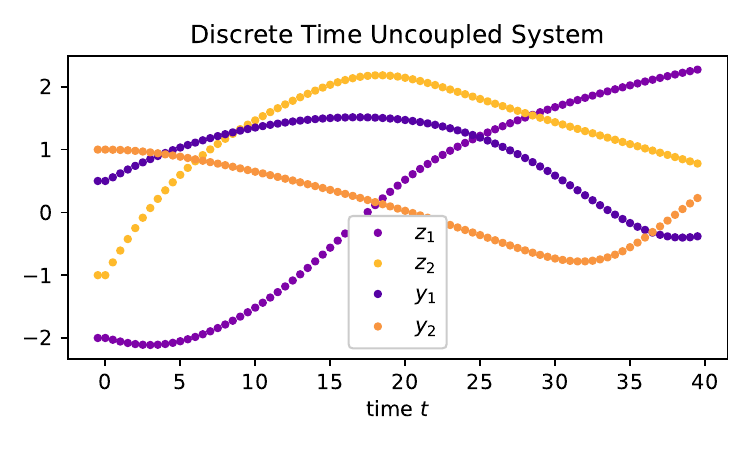}
  \caption{Uncoupled synchronization curves \\ when $\mathbb{T} = \frac{1}{2}\mathbb{Z}$}
  \label{fig:5}
\end{minipage}%
\begin{minipage}{.5\textwidth}
  \centering
  \includegraphics[width=1.\linewidth]{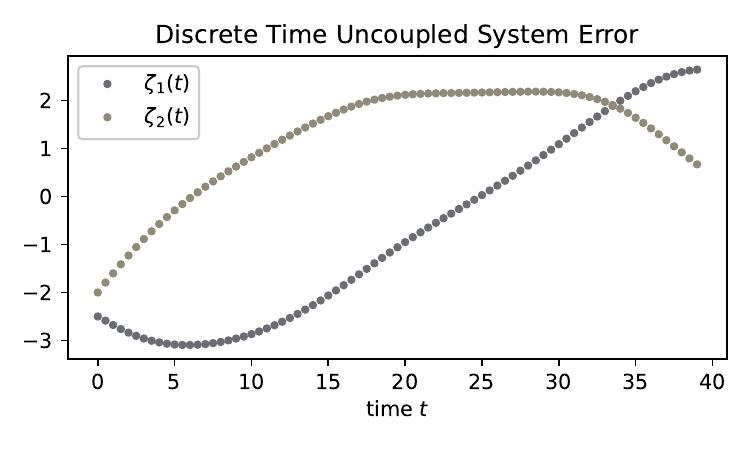}
  \caption{Uncoupled synchronization error   \\ curves when $\mathbb{T} = \frac{1}{2}\mathbb{Z}$}
  \label{fig:6}
\end{minipage}
\end{figure}
However, for the control gain matrix
$$ 
K = \begin{bmatrix}
2.0 & 0.0\\
0.0 & 2.0
\end{bmatrix},$$ 
we can calculate 
$$
\mathcal{M}_2^1 = 0.7360, \ \mathcal{M}_2^2 = 0.7430, \ 
\mathcal{M}_2^\infty = 0.8000 
$$
and 
\begin{align*}
  \Lambda_1(-K) &= -2.000, \ \Lambda_2(-K) = -2.000,\    \Lambda_\infty(-K) = -2.000.
\end{align*}  
 Hence, 
 $$
 \mathcal{M}_1^1  = 0.5800 ,\quad \mathcal{M}_1^2 = 0.8242, \quad \text{and } \mathcal{M}_1^\infty = 0.8840.
 $$
Therefore, we can see that 
$\mathcal{M}_1^1 - \mathcal{M}_2^1 = -0.1560 <0,$  
$\mathcal{M}_1^2 - \mathcal{M}_2^2 = 0.0812 >0,$ 
and  
$\mathcal{M}_1^\infty - \mathcal{M}_2^\infty =  0.0840 >0$. 
Also, $-\mathcal{M}_1^2, -\mathcal{M}_1^\infty \in \mathcal{R}^+$.
Hence, for $p=2, \infty$, all the conditions of \Cref{main-theorem-1} hold, 
and 
thus, the systems \eqref{eq:main-D}--\eqref{eq:main-R} with feedback control \eqref{eq:control} are exponentially lag-synchronized with the maximum
rate of convergence for $p = 2,\infty$ are $0.0583$ and $0.0590$, respectively.
The synchronized curves and synchronized errors curves with feedback control are shown in  Fig. \ref{fig:7} and Fig. \ref{fig:8}, respectively.
\begin{figure}
\centering
\begin{minipage}{.5\textwidth}
  \centering
  \includegraphics[width=1\linewidth]{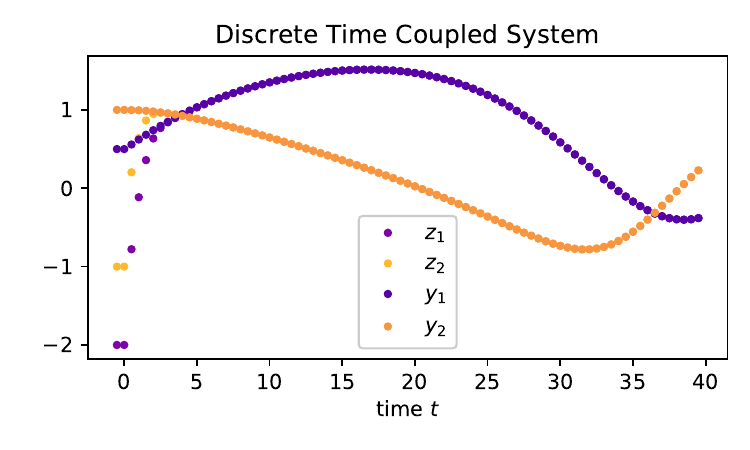}
  \caption{Coupled synchronization curves  \\ when $\mathbb{T} = \frac{1}{2}\mathbb{Z}$}
  \label{fig:7}
\end{minipage}%
\begin{minipage}{.5\textwidth}
  \centering
  \includegraphics[width=1.\linewidth]{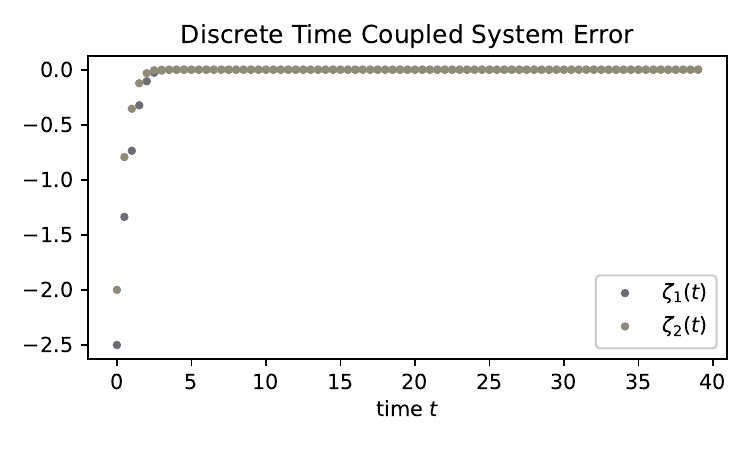}
  \caption{Coupled synchronization error curves  \\ when $\mathbb{T} = \frac{1}{2}\mathbb{Z}$}
  \label{fig:8}
\end{minipage}
\end{figure}
%%%%%%%%%%%%%%%%%%%%%%%%%%%%%%%%%%%%%%%%%%%%%%

\noindent \textbf{Case 3.} $\mathbb{T} = \mathcal{P} = [-1,0] \cup_{i=0}^\infty [i, i+0.7]$.
 Let $\t_1=\t_2=\beta = 1$. 
 Here, $\t=1$ and the graininess function $\mu(t)$ is given by 
\begin{align*}
\mu(t) = 
\begin{cases}
0, \ t \in [-1,0] \cup_{i=0}^\infty [i, i+0.7), \\
0.3, \ t= \cup_{i=0}^\infty \{i+0.7 \}.
\end{cases}
\end{align*}  
The state trajectories and the error trajectories of the systems \eqref{eq:main-D}--\eqref{eq:main-R} without feedback control are shown in Fig. \ref{fig:9} and Fig. \ref{fig:10}, respectively which are clearly not synchronized.  
\graphicspath{{plot-ts-1/}}
\begin{figure}
\centering
\begin{minipage}{.5\textwidth}
  \centering
  \includegraphics[width=1\linewidth]{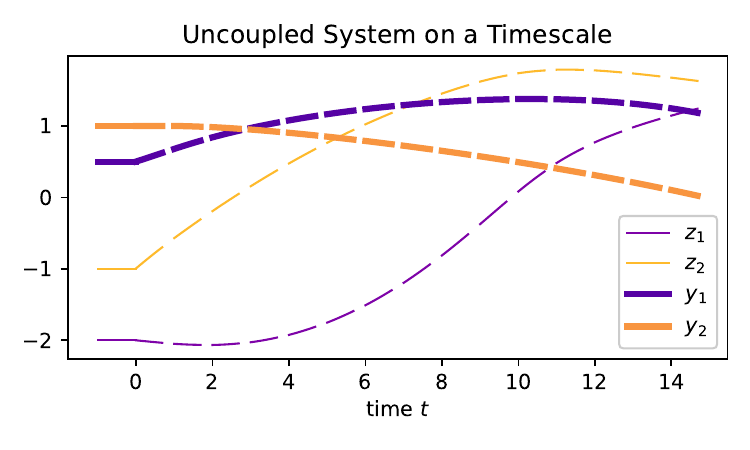}
  \caption{Uncoupled synchronization curves \\ when $\mathbb{T} = \mathcal{P}$}
  \label{fig:9}
\end{minipage}%
\begin{minipage}{.5\textwidth}
  \centering
  \includegraphics[width=1.\linewidth]{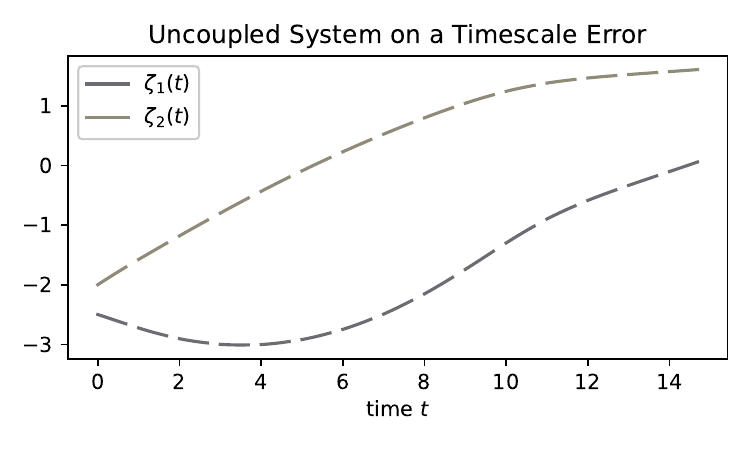}
  \caption{Uncoupled synchronization error curves  when $\mathbb{T} = \mathcal{P}$}
  \label{fig:10}
\end{minipage}
\end{figure}
However, for the control gain matrix
$$ 
K = \begin{bmatrix}
2.4 & 0.0\\
0.0 & 2.4
\end{bmatrix},$$ 
we can calculate 
$$
\mathcal{M}_2^1 = 1.0880, \ \mathcal{M}_2^2 = 1.0950, \ 
\mathcal{M}_2^\infty = 1.1520 
$$
and 
\begin{align*}
  \Lambda_1(-K) &= -2.4000, \ \Lambda_2(-K) = -2.4000,\    \Lambda_\infty(-K) = -2.4000.
\end{align*}  
 Hence, 
 $$
 \mathcal{M}_1^1  = 0.9800 ,\quad \mathcal{M}_1^2 = 1.2242, \quad \text{and } \mathcal{M}_1^\infty = 1.2840.
 $$
Therefore, we can see that 
$\mathcal{M}_1^1 - \mathcal{M}_2^1 = -0.1080 <0,$  
$\mathcal{M}_1^2 - \mathcal{M}_2^2 = 0.1292 >0,$ 
and  
$\mathcal{M}_1^\infty - \mathcal{M}_2^\infty =  0.1320 >0$.
Also, $-\mathcal{M}_1^2, -\mathcal{M}_1^\infty \in \mathcal{R}^+$. 
Hence, for $p=2, \infty$, all the conditions of \Cref{main-theorem-1} hold
and 
thus, the systems \eqref{eq:main-D}--\eqref{eq:main-R} with feedback control \eqref{eq:control} are exponentially lag-synchronized with the maximum
rate of convergence for $p = 2,\infty$ are $0.0602$ and $0.0599$, respectively.
The synchronized curves and synchronized errors curves with feedback control are shown in  Fig. \ref{fig:11} and Fig. \ref{fig:12}, respectively.
\begin{figure}
\centering
\begin{minipage}{.5\textwidth}
  \centering
  \includegraphics[width=1\linewidth]{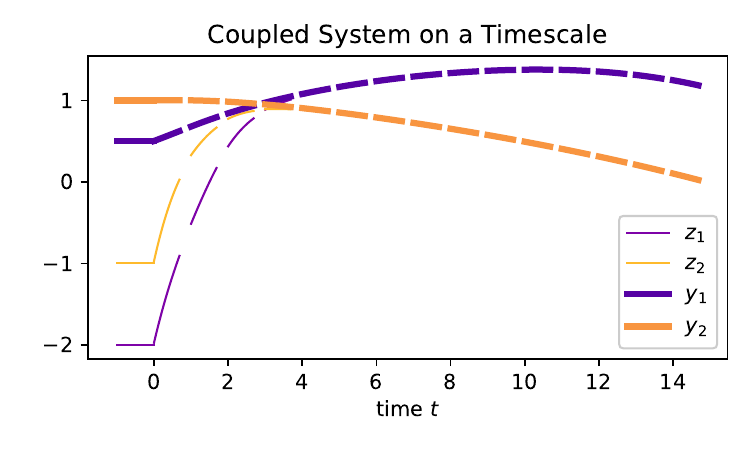}
  \caption{Coupled synchronization curves  \\ when $\mathbb{T} = \frac{1}{2}\mathbb{Z}$}
  \label{fig:11}
\end{minipage}%
\begin{minipage}{.5\textwidth}
  \centering
  \includegraphics[width=1.\linewidth]{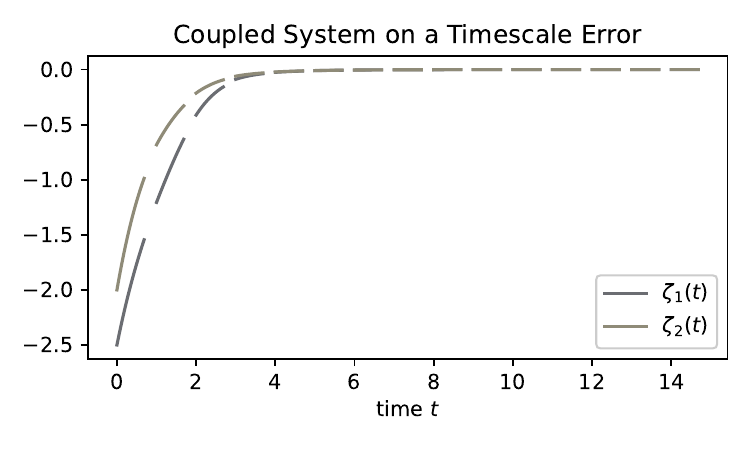}
  \caption{Coupled synchronization error curves  when $\mathbb{T} = \frac{1}{2}\mathbb{Z}$}
  \label{fig:12}
\end{minipage}
\end{figure}

Next, we provide another example to illustrate our main \Cref{main-theorem-1-NN}. 
\begin{example} \label{ex-3}
Consider the continuous-time case of the drive and response systems \eqref{eq:main-D-NN}--\eqref{eq:main-R-NN} with the following coefficients as in \cite[Ex. 2]{add-3}
\begin{align*}
&\A = \begin{bmatrix}
1 & 0 \\
0 & 1
\end{bmatrix}, \ \P = \begin{bmatrix}
2.0 & -0.1 \\
-5.0 & 2.8
\end{bmatrix}, \ 
\Q = \begin{bmatrix}
-1.6 & -0.1 \\ 
-0.3 & -2.5
\end{bmatrix},
\ 
\R = \begin{bmatrix}
0.5 & 0.6 \\
0.7 & 0.2
\end{bmatrix}, \ 
I = \begin{bmatrix}
0.0 \\ 
0.0
\end{bmatrix}, \\
&
\f(\x(t))=
\begin{bmatrix}
\tanh(\x_1(t))  \\
\tanh(\x_2(t)))
\end{bmatrix}, \ \t_1 = 0.6, \t_2=0.2, \beta= 0,
\\ 
& \phi(s) = 
\begin{bmatrix}
0.2 \\ 0.4 
\end{bmatrix},\
\psi(s) =
\begin{bmatrix}
-0.4 \\ -0.6 
\end{bmatrix} \ \text{for }
s \in  [-1,0]_\mathbb{T}, \ Z=\Id.
\end{align*}
\end{example}
One can confirm that for \Cref{ex-3}, $\f$ satisfies the Lipschitz conditions with $ L_\f =1$.
The state trajectories and the error trajectories of the systems \eqref{eq:main-D-NN}--\eqref{eq:main-R-NN} without feedback control are shown in Fig. \ref{fig:19} and Fig. \ref{fig:20}, respectively. Clearly, from Fig. \ref{fig:19} and Fig. \ref{fig:20}, the drive system \eqref{eq:main-D-NN} and the response system \eqref{eq:main-R-NN} are not synchronized.  
\graphicspath{{plot-ex-3/}}
\begin{figure}
\centering
\begin{minipage}{.5\textwidth}
  \centering
  \includegraphics[width=1\linewidth]{2x2Uncoupled-System}
  \caption{Uncoupled synchronization curves}
  \label{fig:19}
\end{minipage}%
\begin{minipage}{.5\textwidth}
  \centering
  \includegraphics[width=1.\linewidth]{2x2Uncoupled-System-lse}
  \caption{Uncoupled synchronization error \\curves }
  \label{fig:20}
\end{minipage}
\end{figure}
However, for the control gain matrix
$$ 
K = \begin{bmatrix}
3.8 & 0.0\\
0.0 & 3.8
\end{bmatrix},$$ 
we can calculate 
$$
\mathcal{M}_4^1 = 3.320, \ \mathcal{M}_4^2 = 3.1331, \ 
\mathcal{M}_4^\infty = 3.460, 
$$
and 
\begin{align*}
  \Lambda_1(-(\A+K)) &= -4.80, \ \Lambda_2(-(\A+K)) = -9.60,\    \Lambda_\infty(-(\A+K)) = -4.80.
\end{align*}  
 Hence, 
 $$
 \mathcal{M}_3^1  = -2.200,\quad \mathcal{M}_3^2 = 3.5892, \quad \text{and } \mathcal{M}_3^\infty = -3.0.
 $$
Therefore, we see that 
$\mathcal{M}_3^1 - \mathcal{M}_4^1 = -5.520 <0,$  
$\mathcal{M}_3^2 - \mathcal{M}_4^2 = 0.4561 >0,$ 
and  
$\mathcal{M}_3^\infty - \mathcal{M}_4^\infty =  -6.460 <0$. 
Also, $-\mathcal{M}_3^2 \in \mathcal{R}^+$.
Hence, for $p=2$, all the conditions of \Cref{main-theorem-1-NN} hold,
and 
thus, the systems \eqref{eq:main-D-NN}--\eqref{eq:main-R-NN} with feedback control \eqref{eq:control} are exponentially synchronized with the maximum
rate of convergence $0.1533$.
The synchronized curves and synchronized errors curves with feedback control are shown in  Fig. \ref{fig:21} and Fig. \ref{fig:22}, respectively.
\begin{figure}
\centering
\begin{minipage}{.5\textwidth}
  \centering
  \includegraphics[width=1\linewidth]{2x2Coupled-System}
  \caption{Coupled synchronization curves }
  \label{fig:21}
\end{minipage}%
\begin{minipage}{.5\textwidth}
  \centering
  \includegraphics[width=1.\linewidth]{2x2Coupled-System-lse}
  \caption{Coupled synchronization error curves}
  \label{fig:22}
\end{minipage}
\end{figure}

Comparing the results quantitatively, we note that our approach provides a faster error convergence rate $0.1533$, compared to the convergence rate of $0.01$ reported in \cite{add-3}.

\begin{remark}
%We can see that our proposed examples in this paper can be applied to a wide range of time domains, including continuous-time domain (case 1 of \Cref{ex-1}, \Cref{ex-3}), discrete-time domain (case 2 of \Cref{ex-1}), and hybrid-type time domain (case 3 of \Cref{ex-1}). 
%It is important to note that 
Previous works, such as \cite{work-lag-synchro-2,syn-lag,add-3, syn-finite, syn-adaptive,CG-D-3,CG-D-7,add-1,add-2}, have considered similar types of examples %but have been restricted to  
on either continuous or discrete-time domains. 
To the best of our knowledge, there is currently no other example in the literature that has 
addressed lag synchronization of CGNNs on hybrid-type time domains (as presented in case 3 of \Cref{ex-1}).

%the hybrid-type time domain (case 3 of \Cref{ex-1}). Therefore, the examples in this paper are more general than the examples in \cite{work-lag-synchro-2,syn-lag, add-3, syn-finite, syn-adaptive,CG-D-3,CG-D-7,add-1,add-2}. 
%It is also worth emphasizing that our main goal in this paper is to present a general model that can handle different scenarios and provide a more comprehensive understanding of the system's behaviour. We believe that the proposed example achieves this goal by being applicable to a wide range of time domains and providing a unique approach to unifying the different models using the time scales model and matrix-measure method for investigating the results. This approach provides a more comprehensive and general understanding of the system's behaviour. 
\end{remark}

%%%%%%%%%%%%%%%%%%%%%%%%%%%%%%%%%%%%%%%%%%%%%%%
\section*{Conclusion}
We have successfully established the exponential lag synchronization results for a new class of C-GNNs with discrete and distributed time delays on arbitrary time domains by using the theory of time scales and feedback control law.  We have also studied some special cases of the considered problem. 
We mainly used a unified matrix-measure theory and Halanay inequality to establish these results. The obtained results are verified by
providing some simulated examples for different time domains including the continuous-time domain (case 1 of \Cref{ex-1}, \Cref{ex-3}),
discrete-time domain (case 2 of \Cref{ex-1}), and non-overlapping time domain (case 3 of \Cref{ex-1}).
%The used assumption in this paper is well-supported and commonly used in the field. But still, one possible future direction could be to extend the developed results under weaker and less restrictive conditions.
Possible future research could concern an extension of the results to non-smooth though still bounded functions. 
Another potential future direction is to further investigate the stability and synchronization results for C-GNNs with delays and impulsive conditions on arbitrary time domains. This could include studying the effects of different types of delays, such as time-varying delays or distributed delays, on the synchronization of C-GNNs.
%Another possibility could be to investigate the stability and synchronization results for C-GNNs with delays and impulsive conditions on arbitrary time domains. 
%Additionally, it could be interesting to investigate the robustness and reliability of the synchronization results for the C-GNNs with delays and stochastic effects on time scales.
Additionally, it could be interesting to investigate the robustness and reliability of the synchronization results for C-GNNs with delays and stochastic effects on time scales. This could include studying the effects of random disturbances or noise on the synchronization of C-GNNs, and how the proposed approach can be modified to handle these types of effects.

%%%%%%%%%%%%%%%%%%%%%%55

%\section*{Acknowledgement}
%We would like to express our gratitude to the associate editor and anonymous reviewers for their constructive comments and suggestions to improve the quality of the paper. 

\end{document}